\newtheorem{thm}{Theorem}   
\newtheorem{rem}[thm] {Remark} 
 \newtheorem{lem}[thm]{Lemma}   
\newtheorem{prop}[thm]{Proposition}
\newtheorem{defi}[thm]{Definition}
\newcommand{\Lip}{{\rm Lip}}
\newlength{\fixboxwidth}   
\title{\scshape\LARGE The unified version of mixing maps  between  non-void sets}
\begin{document}

\author[1]{\small Salam Adel Al-Bayati}
\author[2]{\small Akram Al-Sabbagh}
\author[3]{\small Manaf Adnan Saleh Saleh}

\affil[1]{\footnotesize Department of Mathematics and Computer Applications, College of Science, Al-Nahrain University, Baghdad, Iraq. 
Email: salamahmed@sc.nahrainuniv.edu.iq}
\affil[2]{\footnotesize Department of Mathematics and Computer Applications, College of Science, Al-Nahrain University, Baghdad, Iraq. Email: akj@sc.nahrainuniv.edu.iq}
\affil[3]{\footnotesize Department of Mathematics and Computer Applications, College of Science, Al-Nahrain University, Baghdad, Iraq. Email: mas@sc.nahrainuniv.edu.iq}

\maketitle   

\begin{abstract} 
The nonlinear concepts of mixed summable families and maps for the spaces that only non-void sets are developed. Several characterisations of the corresponding concepts are achieved and the proof for a general Pietsch Domination-type theorem is established. Furthermore, this work has presented plenty of composition and inclusion results between different classes of mappings in the abstract settings. Finally, a generalized notation of mixing maps and their characteristics are extended to a more general setting.
\end{abstract}    
\textbf{2010 AMS Subject Classification}. Primary 	47Jxx	; 	47Hxx; 	  Secondary 	54Cxx, 03E75. \\ 
\textbf{Key words and phrases}.  Hausdorff topological space, Set theory, Nonlinear mixing maps; Pietsch Domination Theorem; Composition Theorem.
\section{Notations and Preliminaries} 
We introduce notations that will be used in this article. Let $I$  be an index set. The letters  $\mathbb{R}^{+}$, $\mathbb{R}$, $\mathbb{N}$ stand for the set of all positive real numbers, the set of all real numbers and the set of all natural numbers, respectively, and $\mathbb{K}$ denotes the field of real or complex numbers. Let $A$, $B$, $C$, $C_{1}$ and $D$ be non-void sets  and $\mathcal{H}$ be a non-void family of mappings from $A$ into $B$. Let $E$, $F$ and $G$ be Banach spaces and the closed unit ball of a Banach space $E$ is denoted by $B_{E}$. The dual space of $E$ is  denoted by $E^{*}$. The letters $X$ and $Y$ stand for pointed metric spaces. A map $T$ from $X$ into $Y$ is called Lipschitz if there is a nonnegative constant $C$ such that $d_{Y}\left(T x_{1}, T x_{2}\right)\leq C\ d_{X}\left(x_{1}, x_{2}\right),$ for all $x_{1}$, $x_{2}$ in $X$. The smallest possible $C$ is the Lipschitz constant of $T$ denoted by $\Lip(T)$. The Banach space of real-valued Lipschitz maps defined on $X$ that send the special point $0$ to $0$ with the Lipschitz norm $\Lip(\cdot)$ will be denoted by $X^{\#}$. The space $X^{\#}$ is called the Lipschitz dual of $X$. Let $K$ and $W$ be compact Hausdorff topological spaces. The symbols $\textbf{W}(W)$, $\textbf{W}(B_{E^{*}})$ and $\textbf{W}(B_{X^{ \#}})$ stand for the set of all Borel probability measures defined on $W$, $B_{E^{*}}$ and $B_{X^{ \#}}$, respectively. The value of  $a$  at the element  $x$  is denoted by $\left\langle x, a\right\rangle$.
\section{Introduction}
The usual mathematical problems include nonlinear operators, occasionally influential on arbitrary sets with few (or none) algebraic structures; hence the extension of linear mechanisms to the nonlinear setting, besides its essential mathematical interest, is an important duty for potential applications. The full general version of maps (with no structure of the spaces included) would certainly be interesting for potential applications. G. Botelho, D. Pellegrino, and P. Rueda \cite{BPR10} defined the concept of \textbf{R}--\textbf{S}-abstract $p$-summing map as follows. Let $0<p<\infty$. A mapping $T\in\mathcal{H}$ is said to be \textbf{R}--\textbf{S}-abstract $p$-summing if there is a constant $\delta>0$ such that
\begin{equation}\label{lllllllllllllll}
\left[\sum\limits_{j=1}^{m}\textbf{S}(T, c_{j}, b_{j})^{p}\right]^{\frac{1}{p}}\leq \delta\cdot\sup\limits_{\varphi\in K}\left[\sum\limits_{j=1}^{m}\textbf{R}(\varphi, c_{j}, b_{j})^{p}\right]^{\frac{1}{p}}
\end{equation}
for all $c_{1}, \cdots, c_{m}\in C$, $b_{1}, \cdots, b_{m}\in G$ and $m\in\mathbb{N}$. The infimum of such constants $\delta$ is denoted by $\pi_{\textbf{R}\textbf{S}, p}(T)$.  They established a quite general Pietsch Domination-type Theorem under certain hypotheses on \textbf{R} and \textbf{S} as follows.

(\textbf{1}) For each $T\in\mathcal{H}$, there is $c_{0}\in C$ such that $\textbf{R}(\varphi, c_{0},b)=\textbf{S}(T, c_{0},b)=0$ for every $\varphi\in K$ and $b\in G$.

(\textbf{2}) The mapping $\textbf{R}_{c, b}: K\longrightarrow [0, \infty)$ defined by $\textbf{R}_{c, b}(\varphi)=\textbf{R}(c, b, \varphi)$ is continuous for every $c\in C$  and $b\in G$.

(\textbf{3}) It holds that $\textbf{R}(\varphi, c,  \eta\; b) \leq \eta\cdot\textbf{R}(\varphi, c, b)$ and $\eta\cdot \textbf{S}(T, c,  b) \leq \textbf{S}(T, c,  \eta\; b)$ for every $\varphi\in K$, $c\in C$, $0\leq\eta\leq 1$, $b\in G$ and $T\in\mathcal{H}$. 
\begin{thm}(see \cite{BPR10}.)\label{m2} 
If $\textbf{R}$ and $\textbf{S}$ satisfy Conditions (\textbf{1}), (\textbf{2}) and (\textbf{3}) and $0<p<\infty$, then $T\in\mathcal{H}$ is \textbf{R}--\textbf{S}-abstract $p$-summing map if and only if there are constant $\delta > 0$ and  Borel probability measure $\nu$ on $K$ such that $$\textbf{S}(T, c, b)\leq\delta\cdot\left[\int\limits_{K}\textbf{R}(c, b, \varphi)^{p} d\nu(\varphi)\right]^\frac{1}{p},$$
whenever $c\in C$ and $b\in G$. 
\end{thm}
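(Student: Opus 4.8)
The plan is to prove this Pietsch Domination-type theorem by the classical separation-and-minimax argument, adapted to the abstract $\textbf{R}$--$\textbf{S}$ setting. The easy direction is the implication from the integral domination inequality back to the summing inequality: assuming $\textbf{S}(T,c,b)\le\delta\cdot\left[\int_{K}\textbf{R}(c,b,\varphi)^{p}\,d\nu(\varphi)\right]^{1/p}$, I would raise both sides to the $p$-th power, sum over $j=1,\dots,m$, interchange the finite sum with the integral, and bound the resulting integrand pointwise by the supremum over $\varphi\in K$; since $\nu$ is a probability measure the integral of that constant supremum is just the supremum itself, yielding exactly inequality~\eqref{lllllllllllllll} with the same constant $\delta$. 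This direction uses none of the hypotheses except that $\nu(K)=1$.

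For the hard direction I would fix a map $T\in\mathcal{H}$ that is $\textbf{R}$--$\textbf{S}$-abstract $p$-summing with constant $\pi_{\textbf{R}\textbf{S},p}(T)\le\delta$, and set up the standard convex-subset-of-$C(K)$ argument. The idea is to encode the summing inequality as a separation statement in the space $C(K)$ of continuous real-valued functions on the compact Hausdorff space $K$. For each finite family $(c_{j},b_{j})_{j=1}^{m}$ in $C\times G$ I would define the function $g\in C(K)$ by $g(\varphi)=\sum_{j=1}^{m}\bigl(\delta^{p}\,\textbf{R}(c_{j},b_{j},\varphi)^{p}-\textbf{S}(T,c_{j},b_{j})^{p}\bigr)$, which is continuous precisely because Condition~(\textbf{2}) guarantees each $\varphi\mapsto\textbf{R}(c_{j},b_{j},\varphi)$ is continuous. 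Let $\mathcal{P}$ be the set of all such functions $g$ arising from arbitrary finite families. The defining summing inequality says exactly that every $g\in\mathcal{P}$ satisfies $\sup_{\varphi\in K}g(\varphi)\ge 0$, i.e. no $g\in\mathcal{P}$ is strictly negative on all of $K$.

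The key step is then to verify that $\mathcal{P}$ is a convex cone in $C(K)$ that is disjoint from the open cone $\mathcal{N}=\{h\in C(K):\max_{\varphi\in K}h(\varphi)<0\}$ of strictly negative functions. Convexity and the cone property of $\mathcal{P}$ follow by concatenating families and by the scaling behaviour built into Condition~(\textbf{3}), which controls how $\textbf{R}$ and $\textbf{S}$ respond to rescaling the argument $b$; this is where the homogeneity hypothesis does its work, since it lets me absorb positive scalars into the entries $b_{j}$ so that scalar multiples of elements of $\mathcal{P}$ remain in $\mathcal{P}$. Having established disjointness of the convex set $\mathcal{P}$ from the open convex set $\mathcal{N}$, I would invoke the Hahn--Banach separation theorem to produce a nonzero continuous linear functional separating them; by the Riesz representation theorem this functional is integration against a finite signed Borel measure on $K$, and a routine positivity check (testing against nonnegative functions and against the constants) shows it is in fact a nonnegative measure, which after normalisation becomes a Borel probability measure $\nu\in\textbf{W}(K)$. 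Evaluating the separation inequality on the one-term families $g(\varphi)=\delta^{p}\textbf{R}(c,b,\varphi)^{p}-\textbf{S}(T,c,b)^{p}$ then yields $\textbf{S}(T,c,b)^{p}\le\delta^{p}\int_{K}\textbf{R}(c,b,\varphi)^{p}\,d\nu(\varphi)$ for all $c\in C$ and $b\in G$, which is the claimed domination after taking $p$-th roots. I expect the main obstacle to be the clean verification that $\mathcal{P}$ is genuinely a convex cone: Condition~(\textbf{3}) only gives the inequalities $\textbf{R}(\varphi,c,\eta b)\le\eta\,\textbf{R}(\varphi,c,b)$ and $\eta\,\textbf{S}(T,c,b)\le\textbf{S}(T,c,\eta b)$ for $0\le\eta\le1$ rather than exact homogeneity, so scaling must be handled through these one-sided estimates (and Condition~(\textbf{1}) is needed to pad families with a harmless zero entry $c_{0}$ so that sums of different lengths can be compared), and I would need to check carefully that these one-sided bounds point in the directions required to keep $\mathcal{P}$ closed under the relevant scalings.
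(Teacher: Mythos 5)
You should first be aware that the paper contains no proof of this statement: Theorem \ref{m2} is imported verbatim from Botelho--Pellegrino--Rueda \cite{BPR10} as background material, and the paper likewise omits the proof of its own abstract analogue (Proposition \ref{msfgsfgfgf1}), deferring to \cite{BPR11}. So your argument can only be measured against the proof in the cited literature, and against that benchmark it follows essentially the standard route: your easy direction is exactly the standard one, and your hard direction is the same separation scheme that \cite{BPR10} adapts from the classical Pietsch domination theorem.

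The one step you flag as the main obstacle does require a correction, but a benign one. Your set $\mathcal{P}$ is \emph{not} a convex cone: Condition (\textbf{3}) only lets you absorb scalars $\eta\in[0,1]$ into the $b$-entries, and even then it yields one-sided inequalities rather than identities, while scalars $t>1$ cannot be absorbed at all. What is true, and what suffices, is a domination statement. Given $g_{1},g_{2}\in\mathcal{P}$ coming from families $(c_{j},b_{j})_{j}$ and $(c_{k}',b_{k}')_{k}$, and $0<\lambda<1$, set $\eta=\lambda^{1/p}$, $\eta'=(1-\lambda)^{1/p}$ and let $g_{3}\in\mathcal{P}$ be the function of the concatenated family $\left(c_{j},\eta b_{j}\right)_{j}\cup\left(c_{k}',\eta' b_{k}'\right)_{k}$. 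By Condition (\textbf{3}) raised to the $p$-th power (all quantities being nonnegative), each $\textbf{R}$-term of $g_{3}$ is at most $\lambda$ (resp.\ $1-\lambda$) times the corresponding term of $g_{1}$ (resp.\ $g_{2}$), while each $\textbf{S}$-term is at least that multiple; hence $g_{3}\le\lambda g_{1}+(1-\lambda)g_{2}$ pointwise on $K$. Since the summing hypothesis says precisely that $\sup_{K}g\ge 0$ for every $g\in\mathcal{P}$, it follows that $\sup_{K}\left(\lambda g_{1}+(1-\lambda)g_{2}\right)\ge\sup_{K}g_{3}\ge 0$, so $\conv(\mathcal{P})$ remains disjoint from $\mathcal{N}$. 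Hahn--Banach separation of the convex set $\conv(\mathcal{P})$ from the open convex set $\mathcal{N}$ is all you need; the cone property is required only of $\mathcal{N}$ (it forces the separating functional to be a nonnegative measure and the separating threshold to be $0$), never of $\mathcal{P}$. Finally, Condition (\textbf{1}) is not actually needed for the padding purpose you assign to it---concatenation already compares families of different lengths in this version of the argument. With these adjustments your plan closes up into a complete and correct proof.
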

Building upon the observation was made by M. Mendel and G. Schechtman that appears in \cite{FJ09}.  D. Pellegrino and J. Santos \cite{BPR11} defined the equivalent to Inequality (\ref{lllllllllllllll}) as follows. A mapping $T\in\mathcal{H}$ is said to be \textbf{R}--\textbf{S}-abstract $p$-summing if there is a constant $\delta>0$ such that
\begin{equation}\label{m3} 
\left[\sum\limits_{j=1}^{m}\lambda_{j}\;\textbf{S}(T, c_{j}, b_{j})^{p}\right]^{\frac{1}{p}}\leq \delta\cdot\sup\limits_{\varphi\in K}\left[\sum\limits_{j=1}^{m}\lambda_{j}\;\textbf{R}(\varphi, c_{j}, b_{j})^{p}\right]^{\frac{1}{p}}
\end{equation}
for all $c_{1}, \cdots, c_{m}\in C$, $b_{1}, \cdots, b_{m}\in G$, $\lambda_{1}, \cdots, \lambda_{m}\in\mathbb{R}^{+}$, and $m\in\mathbb{N}$. From Inequality (\ref{m3}) and invoking \cite [Theorem 2.1] {BPR10} they proved a general Pietsch Domination-type Theorem with no assumption on \textbf{S}
and just supposing that \textbf{R} satisfies Condition (\textbf{2}) as follows.
\begin{thm}(see \cite{BPR11}.)\label{m4}
If \textbf{R} satisfies Condition (\textbf{2}) and $0<p<\infty$, then $T\in\mathcal{H}$ be \textbf{R}--\textbf{S}-abstract $p$-summing map if and only if there are constant $\delta > 0$ and  Borel probability measure $\nu$ on $K$ such that $$\textbf{S}(T, c, b)\leq\delta\cdot\left[\int\limits_{K}\textbf{R}(c, b, \varphi)^{p} d\nu(\varphi)\right]^\frac{1}{p},$$
whenever $c\in C$ and $b\in G$. 
\end{thm}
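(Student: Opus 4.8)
The plan is to deduce Theorem \ref{m4} from Theorem \ref{m2} by an auxiliary-parameter trick that absorbs the weights $\lambda_j$ and simultaneously manufactures the structural hypotheses (\textbf{1}) and (\textbf{3}) that Theorem \ref{m4} no longer assumes. The central idea is that the weighted inequality (\ref{m3}) is really an unweighted \textbf{R}--\textbf{S}-summing inequality over an enlarged ground set, so one should build new data $\widetilde{\textbf{R}}$, $\widetilde{\textbf{S}}$ on which the old domination theorem applies, and then read the conclusion back on the original $\textbf{R}$, $\textbf{S}$.

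Concretely, first I would enlarge the parameter set by setting $\widetilde{C}=C\times\mathbb{R}^{+}$ and, for a pair $\widetilde{c}=(c,\lambda)$, define $\widetilde{\textbf{R}}(\varphi,\widetilde{c},b)=\lambda^{1/p}\,\textbf{R}(\varphi,c,b)$ and $\widetilde{\textbf{S}}(T,\widetilde{c},b)=\lambda^{1/p}\,\textbf{S}(T,c,b)$. Under this definition the unweighted abstract $p$-summing inequality (\ref{lllllllllllllll}) for $(\widetilde{\textbf{R}},\widetilde{\textbf{S}})$ is literally inequality (\ref{m3}) for $(\textbf{R},\textbf{S})$, since $\widetilde{\textbf{S}}(T,\widetilde{c_j},b_j)^{p}=\lambda_j\,\textbf{S}(T,c_j,b_j)^{p}$ and similarly for $\widetilde{\textbf{R}}$. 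Next I would verify that $(\widetilde{\textbf{R}},\widetilde{\textbf{S}})$ satisfies Conditions (\textbf{1}), (\textbf{2}), (\textbf{3}). Condition (\textbf{2}) is inherited directly: the map $\varphi\mapsto\widetilde{\textbf{R}}_{\widetilde{c},b}(\varphi)=\lambda^{1/p}\textbf{R}_{c,b}(\varphi)$ is a positive scalar multiple of a continuous function, hence continuous, using the hypothesis that $\textbf{R}$ satisfies (\textbf{2}). Condition (\textbf{1}) is now \emph{free}: taking the parameter $\lambda=0$ (or a limiting $\lambda\to 0^{+}$, working instead with $\widetilde{C}=C\times[0,\infty)$) forces $\widetilde{\textbf{R}}(\varphi,(c,0),b)=\widetilde{\textbf{S}}(T,(c,0),b)=0$ for all $\varphi,b$, so $\widetilde{c}_0=(c,0)$ serves as the required vanishing point. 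Condition (\textbf{3}) is likewise manufactured by the scaling parameter: given $0\le\eta\le1$, I would absorb the factor $\eta$ into the weight, replacing $(c,\lambda)$ by $(c,\eta^{p}\lambda)$, so that the homogeneity inequalities for $\widetilde{\textbf{R}}$ and $\widetilde{\textbf{S}}$ in the last variable $b$ hold \emph{without any assumption on $\textbf{S}$ or $\textbf{R}$ beyond continuity} — this is exactly the mechanism, attributed to Mendel--Schechtman, by which the weights remove the need for hypotheses (\textbf{1}) and (\textbf{3}).

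With $(\widetilde{\textbf{R}},\widetilde{\textbf{S}})$ satisfying all three conditions, I would apply Theorem \ref{m2} to obtain $\delta>0$ and a Borel probability measure $\nu$ on $K$ with $\widetilde{\textbf{S}}(T,\widetilde{c},b)\le\delta\,[\int_{K}\widetilde{\textbf{R}}(\widetilde{c},b,\varphi)^{p}\,d\nu(\varphi)]^{1/p}$ for all $\widetilde{c}\in\widetilde{C}$ and $b\in G$. Finally I would specialise to $\lambda=1$, i.e. $\widetilde{c}=(c,1)$: the factors $\lambda^{1/p}=1$ on both sides cancel the enlargement, and the inequality collapses to $\textbf{S}(T,c,b)\le\delta\,[\int_{K}\textbf{R}(c,b,\varphi)^{p}\,d\nu(\varphi)]^{1/p}$, which is precisely the asserted domination. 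The converse direction is the routine one: given such a measure $\nu$, raising to the $p$-th power, multiplying by $\lambda_j$, summing over $j$, and bounding each integrand by its supremum over $\varphi\in K$ recovers (\ref{m3}) with the same $\delta$; here I would use that $\sum_j\lambda_j\int_K\textbf{R}(c_j,b_j,\varphi)^p\,d\nu(\varphi)\le\sup_{\varphi\in K}\sum_j\lambda_j\textbf{R}(\varphi,c_j,b_j)^p$ since $\nu$ is a probability measure.

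The main obstacle I anticipate is checking the structural conditions cleanly across the degenerate weight $\lambda=0$: one must decide whether to include $0$ in the parameter set (which makes Condition (\textbf{1}) immediate but requires $\widetilde{\textbf{R}},\widetilde{\textbf{S}}$ to remain well defined and Condition (\textbf{2}) to hold there) or to handle it by a limiting argument. A secondary subtlety is confirming that Condition (\textbf{3}) for $\widetilde{\textbf{S}}$ — the inequality $\eta\cdot\widetilde{\textbf{S}}(T,\widetilde{c},b)\le\widetilde{\textbf{S}}(T,\widetilde{c},\eta\,b)$ — genuinely follows from the reparametrisation rather than from an unstated property of $\textbf{S}$; the point is that the weight variable gives enough freedom that no growth assumption on $\textbf{S}$ in the $b$-variable is needed, and I would make this explicit so that the hypothesis-free nature of the theorem (no condition on $\textbf{S}$, only (\textbf{2}) on $\textbf{R}$) is transparent.
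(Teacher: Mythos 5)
Your overall strategy --- recasting the weighted inequality (\ref{m3}) as an unweighted inequality of type (\ref{lllllllllllllll}) for auxiliary maps $\widetilde{\textbf{R}}$, $\widetilde{\textbf{S}}$ on an enlarged parameter set and then invoking Theorem \ref{m2} --- is exactly the route of \cite{BPR11}, which is the proof this paper cites, and your converse direction is correct. However, your specific construction has a genuine gap: Condition (\textbf{3}) fails for your $\widetilde{\textbf{R}}$, $\widetilde{\textbf{S}}$. With $\widetilde{C}=C\times[0,\infty)$ and $\widetilde{\textbf{R}}(\varphi,(c,\lambda),b)=\lambda^{1/p}\,\textbf{R}(\varphi,c,b)$, Condition (\textbf{3}) demands, for each \emph{fixed} $(c,\lambda)$, each $b\in G$ and each $0\leq\eta\leq 1$,
$$\lambda^{1/p}\,\textbf{R}(\varphi,c,\eta\, b)\;\leq\;\eta\,\lambda^{1/p}\,\textbf{R}(\varphi,c,b),
\qquad
\eta\,\lambda^{1/p}\,\textbf{S}(T,c,b)\;\leq\;\lambda^{1/p}\,\textbf{S}(T,c,\eta\, b),$$
which is precisely Condition (\textbf{3}) for the original $\textbf{R}$ and $\textbf{S}$ --- the very hypothesis Theorem \ref{m4} is designed to avoid. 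Your proposed repair, ``replace $(c,\lambda)$ by $(c,\eta^{p}\lambda)$,'' is not legitimate: Condition (\textbf{3}) compares the values of $\widetilde{\textbf{R}}$ and $\widetilde{\textbf{S}}$ at $\eta\, b$ and at $b$ for the \emph{same} parameter $\widetilde{c}$; you are not free to evaluate the two sides at different parameters, and Theorem \ref{m2}, used as a black box, requires the condition as stated. The weight variable $\lambda$ multiplies $\textbf{R}$ and $\textbf{S}$ but never interacts with the $G$-variable, so it cannot manufacture homogeneity in $b$.

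The fix --- and this is what \cite{BPR11} actually does --- is to move $b$ into the parameter slot as well and introduce a fresh scalar dummy variable as the new $G$: set $\widetilde{C}=C\times G\times[0,\infty)$, $\widetilde{G}=\mathbb{R}$, and
$$\widetilde{\textbf{R}}\bigl(\varphi,(c,b,\lambda),t\bigr)=\lambda^{1/p}\,|t|\,\textbf{R}(\varphi,c,b),
\qquad
\widetilde{\textbf{S}}\bigl(T,(c,b,\lambda),t\bigr)=\lambda^{1/p}\,|t|\,\textbf{S}(T,c,b).$$
Now Condition (\textbf{3}) holds with equality because $\widetilde{\textbf{R}}$ and $\widetilde{\textbf{S}}$ are homogeneous of degree $1$ in the new $G$-variable $t$; Condition (\textbf{1}) holds at the point $(c,b,0)$; Condition (\textbf{2}) is inherited from $\textbf{R}$, since $\varphi\mapsto\lambda^{1/p}|t|\,\textbf{R}_{c,b}(\varphi)$ is continuous; and the unweighted inequality (\ref{lllllllllllllll}) for $(\widetilde{\textbf{R}},\widetilde{\textbf{S}})$ with data $((c_j,b_j,\lambda_j),t_j)$ is the weighted inequality (\ref{m3}) with weights $\lambda_j|t_j|^{p}$ (zero weights are harmless, as the corresponding terms vanish on both sides). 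Applying Theorem \ref{m2} to $(\widetilde{\textbf{R}},\widetilde{\textbf{S}})$ and specializing the resulting domination to $\lambda=1$, $t=1$ yields the stated conclusion; with this replacement the rest of your argument, including the converse, goes through verbatim.
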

D. Pellegrino, J. Santos and J. B. Seoane-Sep\'{u}lveda \cite{PSS12} defined  the concept of  $\textbf{R}_{1}, \ldots, \textbf{R}_{t}$--\textbf{S}-abstract $(p_{1} ,\ldots,p_{t})$-summing map as follows. Let $0<p<\infty$. A map $T$ from $A_1\times\cdots\times A_t$ into $B$ is called $\textbf{R}_{1}, \ldots, \textbf{R}_{t}$--\textbf{S}-abstract $(p_{1} ,\ldots,p_{t})$-summing if there is a constant $\delta\geq 0$ such that 
$$\left[\sum\limits_{j=1}^{m}\textbf{S}(T, c_{j}^{1}, \ldots, c_{j}^{r}, b_{j}^{1}, \ldots, b_{j}^{t})^{p}\right]^{\frac{1}{p}}\leq \delta\cdot\displaystyle\prod_{k=1}^{s}\sup\limits_{\varphi\in \textbf{K}_{k}}\left[\sum\limits_{j=1}^{m}\left|\textbf{R}_{k}(c_{j}^{1}, \ldots, c_{j}^{r},  b_{j}^{k}, \varphi)\right|^{p_{k}}\right]^{\frac{1}{p_{k}}}$$
for all  $c_{1}^{1}, \cdots, c_{m}^{r}\in  C_{s}$, $b_{1}^{1}, \cdots, b_{m}^{l}\in  G_{l}$, $m\in\mathbb{N}$ and $(s, l)\in\left\{1, \ldots, r\right\}\times\left\{1, \ldots, t\right\}$. They proved a quite general Pietsch Domination Theorem as follows.
\begin{thm} (see \cite{PSS12}.)
A map $T\in\mathcal{H}$ is \textbf{R}--\textbf{S}-abstract $p$-summing if and only if there are constant $\delta > 0$ and  Borel probability measure $\nu$ on $K$ such that $$\textbf{S}(T, c^{1}, \ldots, c^{r}, b^{1}, \ldots, b^{t})\leq \delta\cdot\displaystyle\prod_{j=1}^{t}\left(\int\limits_{K_{j}}\textbf{R}_{j}(c^{1}, \ldots, c^{r},  b^{j}, \varphi)^{p_{j}} d\nu_{j}(\varphi)\right)^{\frac{1}{p_{j}}}$$
for all $c^{l}\in C_{l}$, $l=1,\ldots, r$ and $b^{j}\in G_{j}$, with $j = 1,\ldots, t$. 
\end{thm}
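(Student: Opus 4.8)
The plan is to reduce the multilinear, multi-factor statement to the single-factor Pietsch Domination Theorem (Theorem~\ref{m4}) applied $t$ times, one index at a time, and then to combine the resulting measures into the claimed product bound. The backward implication is the routine direction: given measures $\nu_1,\dots,\nu_t$ satisfying the integral domination, one raises the displayed inequality to the $p$-th power, sums over $j=1,\dots,m$, and applies H\"older's inequality across the $t$ factors to recover Inequality~(\ref{m3}) in its multi-factor form; I would dispatch this first to fix the constants and conventions. The substance lies in the forward implication, where I must manufacture the $t$ measures from the summing hypothesis.

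First I would isolate a single coordinate. Fix all but the $k$-th family of data and regard the resulting object as a map that is $\textbf{R}_k$--$\textbf{S}$-abstract $p$-summing in the single variable $b^k$, with the $\sup$ over $\textbf{K}_k$ playing the role of the compact index space $K$ in Theorem~\ref{m4}. The key observation is that $\textbf{R}_k$ is assumed to satisfy Condition~(\textbf{2}) (continuity in the last slot $\varphi$), which is precisely the lone hypothesis Theorem~\ref{m4} requires, and crucially Theorem~\ref{m4} imposes \emph{no} condition on $\textbf{S}$. Applying Theorem~\ref{m4} to the $k$-th slot produces a Borel probability measure $\nu_k$ on $\textbf{K}_k$ and a constant $\delta_k>0$ with a one-factor integral domination for that coordinate. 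Iterating over $k=1,\dots,t$ yields all $t$ measures.

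The delicate step is assembling these coordinatewise dominations into the single product inequality, because the naive iteration gives a bound in which, at each stage, the other coordinates are frozen but the constants and the measures must be shown to be chosen uniformly, independent of the frozen data. Here I would exploit the product structure of the right-hand side of~(\ref{m3}): the supremum factorizes as $\prod_{k=1}^{s}\sup_{\varphi\in\textbf{K}_k}(\cdots)$, so the summing inequality itself already separates the variables multiplicatively. This separation lets one run the Pietsch argument of~\cite{BPR11} in each factor against a fixed but arbitrary choice of the remaining data, and a standard compactness/Hahn--Banach separation argument (the same machinery underlying Theorem~\ref{m4}, realized on the product $\prod_k\textbf{W}(\textbf{K}_k)$ of measure simplices) produces a single tuple $(\nu_1,\dots,\nu_t)$ valid simultaneously for all inputs. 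The main obstacle I anticipate is exactly this uniformity: ensuring that the separating functional obtained from the convexity argument decomposes as a product of measures rather than an inseparable measure on the product space $\prod_k\textbf{K}_k$. I expect this to follow from the multiplicative (rather than additive) form of the constraint, which forces the extremal functional to factor, but verifying the factorization carefully -- and checking that each $\nu_k$ is a genuine probability measure after normalization -- is where the real work sits.
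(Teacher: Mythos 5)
First, a framing remark: the paper you are reading does not prove this theorem at all -- it is quoted verbatim as a cited result from \cite{PSS12} -- so your attempt has to be measured against the proof given there, which follows the minimax template of \cite{BPR10, BPR11}.

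Your forward direction has a genuine gap already at the first step. In the definition of an $\textbf{R}_{1},\ldots,\textbf{R}_{t}$--$\textbf{S}$-abstract $(p_{1},\ldots,p_{t})$-summing map, the left-hand side carries the exponent $p$, which is tied to the right-hand exponents by $\frac{1}{p}=\frac{1}{p_{1}}+\cdots+\frac{1}{p_{t}}$, so $p<p_{k}$ for every $k$ as soon as $t\geq 2$. Freezing all data except the $k$-th slot therefore leaves you with an inequality whose left side is an $\ell_{p}$-sum of $\textbf{S}$-values and whose right side is the $\ell_{p_{k}}$-expression in $\textbf{R}_{k}$; this mixed-exponent inequality is \emph{not} the hypothesis of Theorem~\ref{m4}, which requires the same exponent on both sides, and no Pietsch-type domination theorem is available for such mismatched exponents. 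The freezing itself is also not clean: the variables $c^{1},\ldots,c^{r}$ are shared by all factors $\textbf{R}_{l}$, so when you let the $c_{j}$'s vary (as you must in order to feed Theorem~\ref{m4} in slot $k$), the factors with $l\neq k$ do not collapse into a constant $\delta_{k}$ -- they still depend on the data -- and you never obtain a bona fide one-parameter abstract summing map to which Theorem~\ref{m4} applies.

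The second gap is the one you flag yourself, and your proposed resolution does not work. If you run the separation argument on $\textbf{W}(\textbf{K}_{1})\times\cdots\times\textbf{W}(\textbf{K}_{t})$, no ``factorization of an extremal functional'' is needed (the output is a tuple of measures by construction); the true obstruction there is different: the natural functions built from the product $\prod_{k}\left(\int\textbf{R}_{k}^{p_{k}}\,d\nu_{k}\right)^{p/p_{k}}$ are not affine, and the Ky Fan hypothesis (each function attains a nonpositive value, e.g.\ at Dirac measures) fails na\"{\i}vely because H\"older's inequality runs in the wrong direction -- exactly the reverse-H\"older issue hiding behind your phrase ``forces the extremal functional to factor.'' The known proof resolves this with two devices your sketch never invokes: one first linearizes the product by the arithmetic--geometric mean inequality $\prod_{k}t_{k}^{p/p_{k}}\leq\sum_{k}\frac{p}{p_{k}}\,t_{k}$, which makes the functions affine in $(\nu_{1},\ldots,\nu_{t})$ and makes the Dirac verification succeed (the geometric mean of the sup-factors is dominated by their arithmetic mean), and then, after the minimax lemma delivers a single tuple of measures satisfying the \emph{additive} domination, one recovers the claimed \emph{multiplicative} form by a homogeneity/scaling optimization in each slot (the step that uses either homogeneity-type hypotheses or the weighted formulation of Inequality~(\ref{m3})). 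In short: your backward H\"older direction is fine, and $\prod_{k}\textbf{W}(\textbf{K}_{k})$ is indeed the right arena, but both the coordinatewise reduction to Theorem~\ref{m4} and the factorization claim fail, and the missing AM--GM linearization plus scaling argument is precisely the content of the proof in \cite{PSS12}.
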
 
Several authors have investigated a special case version of the class of \textbf{R}--\textbf{S}-abstract $p$-summing maps starting with the seminal papers \cite{P67} (linear version) and \cite{FJ09} (Lipschitz version) and further explored applications in the nonlinear case can be found in \cite{CD11, C12, S16, ADM17, S18, S19}.

 We now describe the contents of this paper.  In Section \ref{III},  we modify Inequality (\ref{m3}) to construct the concept of \textbf{H}--\textbf{Q}-abstract $p$-summing map which is quite useful to prove the main results under certain assumptions in the forthcoming sections. In Section \ref{IIII}, we define the nonlinear version concept of \textbf{M}--mixed $(s;q)$-summable family in which the spaces are just arbitrary sets and establish an important characterization for this notion  under certain hypotheses in abstract settings. In Section \ref{IIIII}, we construct the concept of \textbf{H}--\textbf{M}-$((s; q), p)$-mixing maps between arbitrary sets and prove several characterizations. Afterwards  we show various composition and inclusion results between different classes of mappings in abstract setting  and prove a quite general of Pietsch Domination-type Theorem given in \cite {BPR11}. In Section \ref{IIIIII}, we prove how Proposition \ref{aaa} and Proposition \ref{abuleldgfgfhgfhgfddff} can be appealed in order to get some of the familiar characterizations that have appeared in the different generalizations of the concept of $(s; q)$-mixing operators. It is obvious to see that for suitable choices of $A$, $B$, $C$, $G$, $\mathcal{H}$, $K$, $W$, $\textbf{H}$, and $\textbf{M}$, for a mapping to belong to one of such classes of mixing maps is equivalent to be \textbf{H}--\textbf{M}-$((s; q), p)$-mixing map and the corresponding characterizations that hold for this class is nothing but Proposition \ref{aaa} and Proposition \ref{abuleldgfgfhgfhgfddff}. In Section \ref{IIIIIII}, we generalize a notion of mixing maps and show characterization for this notion to a more general setting. 
%%%%%%%%%%%%%%%%%%%%%%%%%%%%%%%%%%%%%%%%%%%%%%%%%%%%%%%%%%%%%%%%%%%%%%%%%%%%%%%%%%%%%%%%%%%%%%%%%%%%%%%%%%%%%%%%%%%%%%%%%%%%%%%%%%%%%%%%%%%%%%%%%%%%%%%%%%%
\section{\textbf{H}--\textbf{Q}-abstract $p$-summing maps} \label{III}
Let  $\mathcal{H}_{1}$ and $\mathcal{H}_{2}$ be non-void families of mappings from $B$ into $D$ and $A$ into $D$, respectively, and let
\begin{align}
&\textbf{Q}:\mathcal{H}\times A\times C\times G\longrightarrow \mathbb{R},\  \textbf{H}: A\times C\times G\times K\longrightarrow\mathbb{R}, \nonumber \\
&\textbf{Q}_{1}:\mathcal{H}_{1}\times B\times C_{1}\times G\longrightarrow \mathbb{R},\  \textbf{H}_{1}: B\times C_{1}\times G\times W\longrightarrow \mathbb{R}, \nonumber \\
&\textbf{Q}_{2}:\mathcal{H}_{2}\times A\times C\times G\longrightarrow \mathbb{R}  \nonumber
\end{align} 
be arbitrary maps satisfy the following conditions:

(\textbf{I})  The mapping $\textbf{H}_{a, c, g}: K\longrightarrow \mathbb{R}$ defined by $$\textbf{H}_{a, c, g}(\varphi)=\textbf{H}(a, c, g, \varphi)$$ is continuous for every $a\in A$, $c\in C$ and $g\in G$. 

(\textbf{II}) $\textbf{Q}_{2}(S\circ T, a, c, g) \leq\textbf{Q}_{1}(S, T a, c, g)$ for every $T\in\mathcal{H}$, $S\in\mathcal{H}_{1}$, $a\in A$, $c\in C$ and $g\in G$.   
\begin{defi}
Let $0<p<\infty$. A map $T\in\mathcal{H}$ is said to be \textbf{H}--\textbf{Q}-abstract $p$-summing if there is a constant $\delta>0$ such that
\begin{equation}\label{m1}
\left[\sum\limits_{j=1}^{m}\left|\sigma_j\right|^{p}\left|\textbf{Q}(T, a_{j}, c_{j}, g_{j})\right|^{p}\right]^{\frac{1}{p}}\leq \delta\cdot\sup\limits_{\varphi\in K}\left[\sum\limits_{j=1}^{m}\left|\sigma_j\right|^{p}\left|\textbf{H}(\varphi, a_{j}, c_{j}, g_{j})\right|^{p}\right]^{\frac{1}{p}}
\end{equation}
for all nonzero $\sigma_{1},\ldots, \sigma_{m}$ in $\mathbb{R}$, $a_{1},\ldots, a_{m}$ in $A$, $c_{1},\ldots, c_{m}$ in $C$, $g_{1},\ldots, g_{m}$ in $G$ and $m\in\mathbb{N}$. The infimum of such constants $\delta$ is denoted by $\pi_{\textbf{H}\textbf{Q}, p}(T)$. Let us denote by $\Pi_{p}^{\textbf{H}-\textbf{Q}}\left(A, B\right)$ the class of all \textbf{H}--\textbf{Q}-abstract $p$-summing maps from $A$ into $B$. 
\end{defi}
The proof of the next proposition is similar to the main implication in the nonlinear general Pietsch Domination-type Theorem of \cite [Theorem 3.1] {BPR11} and is therefore omitted.
\begin{prop}\label{msfgsfgfgf1}
Suppose that \textbf{Q} is  an arbitrary map and \textbf{H} satisfies Condition (\textbf{I}) and let $0<p<\infty$.  A map $T\in\mathcal{H}$ be \textbf{H}--\textbf{Q}-abstract $p$-summing if and only if there are constant $\delta > 0$ and  Borel probability measure $\nu$ on $K$ such that $$\left|\textbf{Q}(T, a, c, g)\right|\leq\delta\cdot\left[\int\limits_{K}\left|\textbf{H}(a, c, g, \varphi)\right|^{p} d\nu(\varphi)\right]^\frac{1}{p},$$
whenever $a\in A$, $c\in C$, and $g\in G$. 
\end{prop}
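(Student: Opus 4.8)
The plan is to establish the two implications separately; the forward direction (domination $\Rightarrow$ summing) is routine, while the converse rests on a compactness/minimax argument over the probability measures on $K$. For the forward direction, suppose the stated domination inequality holds with constant $\delta$ and measure $\nu$. Given arbitrary nonzero $\sigma_1,\dots,\sigma_m$ and data $a_j\in A$, $c_j\in C$, $g_j\in G$, I would raise the pointwise inequality to the $p$-th power, multiply by $|\sigma_j|^p$, and sum over $j$; interchanging the finite sum with the integral gives
\[
\sum_{j=1}^m|\sigma_j|^p\,|\textbf{Q}(T,a_j,c_j,g_j)|^p\le \delta^p\int_K\sum_{j=1}^m|\sigma_j|^p\,|\textbf{H}(a_j,c_j,g_j,\varphi)|^p\,d\nu(\varphi).
\]
Since $\nu$ is a probability measure, the right-hand integral is at most the supremum over $\varphi\in K$ of its integrand; taking $p$-th roots yields Inequality (\ref{m1}), so $T$ is \textbf{H}--\textbf{Q}-abstract $p$-summing.

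For the converse, assume $T$ is \textbf{H}--\textbf{Q}-abstract $p$-summing with constant $\delta$. Let $\textbf{W}(K)$ denote the set of Borel probability measures on $K$, which is convex and weak-$*$ compact by the Banach--Alaoglu theorem (using that $K$ is compact Hausdorff). I would introduce the family $\mathcal{F}$ of functions $f:K\to\mathbb{R}$ given by
\[
f(\varphi)=\sum_{j=1}^m|\sigma_j|^p\bigl[\delta^p\,|\textbf{H}(a_j,c_j,g_j,\varphi)|^p-|\textbf{Q}(T,a_j,c_j,g_j)|^p\bigr],
\]
ranging over all finite admissible data. By Condition (\textbf{I}) each $\varphi\mapsto|\textbf{H}(a_j,c_j,g_j,\varphi)|^p$ is continuous, so $\mathcal{F}\subseteq C(K)$; moreover $\mathcal{F}$ is a convex cone, being closed under addition (concatenate the data) and positive scaling (rescale the $\sigma_j$). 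Because the $\textbf{Q}$-term is constant in $\varphi$, the summing hypothesis is exactly the statement that $\max_{\varphi\in K}f(\varphi)\ge 0$ for every $f\in\mathcal{F}$.

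The goal now is a single $\nu\in\textbf{W}(K)$ with $\int_K f\,d\nu\ge 0$ for all $f\in\mathcal{F}$; specializing to one-term families ($m=1$, $\sigma_1=1$) then delivers the desired domination. I would set $C_f=\{\nu\in\textbf{W}(K):\int_K f\,d\nu\ge 0\}$, each weak-$*$ closed and hence compact, and prove $\bigcap_{f\in\mathcal{F}}C_f\neq\varnothing$ through the finite intersection property. Given $f_1,\dots,f_n\in\mathcal{F}$, convexity of the cone yields $\sum_k\lambda_k f_k\in\mathcal{F}$ for $\lambda$ in the simplex $\Delta_n$, whence $\max_\varphi\sum_k\lambda_k f_k(\varphi)\ge 0$. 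Applying a minimax theorem (von Neumann--Sion) to the biaffine map $(\lambda,\nu)\mapsto\int_K\sum_k\lambda_k f_k\,d\nu$ on the compact convex sets $\Delta_n$ and $\textbf{W}(K)$, and using that $\sup_{\nu}\int_K h\,d\nu=\max_\varphi h(\varphi)$ for $h\in C(K)$ (the supremum being attained at a Dirac mass), produces $\nu^*$ with $\int_K f_k\,d\nu^*\ge 0$ for each $k$, establishing the finite intersection property.

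The crux is this minimax step: one must correctly couple the weak-$*$ compactness of $\textbf{W}(K)$ with the convex-cone structure of $\mathcal{F}$ and verify that maximizing $\int f\,d\nu$ over probability measures reduces to maximizing $f$ over $K$ via the Dirac masses. Condition (\textbf{I}) is precisely what forces the elements of $\mathcal{F}$ to be continuous, so that the integrals are well defined and the sets $C_f$ are closed; it is the sole structural hypothesis consumed, which explains why no assumption on \textbf{Q} is required, exactly as in the proof of \cite[Theorem 3.1]{BPR11}.
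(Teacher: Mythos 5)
Your proposal is correct, but it follows a genuinely different route from the paper, which in fact prints no proof at all: it declares the argument ``similar to the main implication in the nonlinear general Pietsch Domination-type Theorem of \cite[Theorem 3.1]{BPR11}'' and omits it. The argument behind that citation is a reduction, not a direct proof: in \cite{BPR11} the weighted summing inequality (the role played here by the factors $\left|\sigma_j\right|^{p}$) is absorbed by re-parametrizing the data so that Conditions (\textbf{1})--(\textbf{3}) of \cite{BPR10} hold automatically for suitably redefined maps, after which \cite[Theorem 2.1]{BPR10} is invoked as a black box; all of the compactness analysis (Ky Fan's lemma applied to affine weak-$*$ continuous functions on the probability measures) is hidden inside that cited theorem. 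You instead prove everything from scratch: you form the convex cone $\mathcal{F}\subseteq C(K)$ (continuity of its members being exactly what Condition (\textbf{I}) supplies), observe that the summing hypothesis says each $f\in\mathcal{F}$ has nonnegative maximum, and manufacture the dominating measure by combining weak-$*$ compactness of $\textbf{W}(K)$, the finite intersection property of the weak-$*$ closed sets $C_f$, and Sion's minimax theorem, finally specializing to one-term data; your forward implication is the standard one and is correct as written. In substance this is the classical Pietsch domination argument: you have unpacked Ky Fan's lemma rather than cited it, since your finite-intersection-plus-minimax step is essentially its proof in this affine, compact setting. The trade-off is clear: the paper's citation chain is shorter, while your version is self-contained and makes visible that the only hypotheses consumed are Condition (\textbf{I}) and the compactness of $K$, with no assumption whatsoever on $\textbf{Q}$ --- which is precisely the point of \cite{BPR11}.
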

\section{\textbf{M}--mixed $(s;q)$-summable families} \label{IIII}
Throughout this section let $0<q\leq s\leq\infty$ and the index $r$ be determined by the equation  $\frac{1}{r}+\frac{1}{s}=\frac{1}{q}$. Let $\textbf{M}:\mathcal{H}\times A\times C\times G\times W\longrightarrow \mathbb{R}$ be an arbitrary map satisfy the following conditions: 

(\textbf{III})  The mapping $\textbf{M}_{T, a, c, g}: W\longrightarrow \mathbb{R}$ defined by  $$\textbf{M}_{T, a, c, g}(\psi)=\textbf{M}(T, a, c, g, \psi)$$ is continuous for every $T\in\mathcal{H}$, $a\in A$, $c\in C$ and $g\in G$. 

(\textbf{IV})  The mapping $\textbf{M}$ be a homogeneous of degree $1$ in the variable $W$ if $$\textbf{M}(T, a, c, g, \lambda\;\psi)=\lambda\;\textbf{M}(T, a, c, g, \psi).$$

(\textbf{V}) $\textbf{M}_{2}(S\circ T, a, c, g, \psi) \leq\textbf{M}_{1}(S, T a, c, g, \psi)$ for every $T\in\mathcal{H}$, $S\in\mathcal{H}_{1}$, $a\in A$, $c\in C$, $g\in G$ and $\psi\in W$.

(\textbf{VI})  $\textbf{H}_{1}(T a, c, g, \psi) \leq\textbf{M}(T, a, c, g, \psi)$ for every $T\in\mathcal{H}$, $a\in A$, $c\in C$, $g\in G$ and $\psi\in W$.

(\textbf{VII}) Let $1\leq s<\infty$ and let $\mu\in \textbf{W}(W)$ we consider the map $\textbf{\textit{J}}_{\mu}\in\mathfrak{P}_{s}^{\textbf{H}_{1}-\textbf{Q}_{1}}\left(B, D\right)$ with $\pi_{\textbf{H}_{1}\textbf{Q}_{1}, s}(\textbf{\textit{J}}_{\mu})\leq 1$ such that 
\begin{equation}\label{zabalawe}
\left[\int\limits_{W}\left|\textbf{M}(T, a, c, g, \psi)\right|^{s} d\mu(\psi)\right]^\frac{1}{s}\leq\left|\textbf{Q}_{2}(\textbf{\textit{J}}_{\mu}\circ T, a, c, g)\right|
\end{equation}
 for every $T\in\mathcal{H}$, $\textbf{\textit{J}}_{\mu}\in\mathcal{H}_{1}$, $a\in A$, $c\in C$ and $g\in G$. 
\begin{rem}
Condition (\textbf{VII}) can be applied in the following special cases.  
\begin{enumerate}
	\item We put $A:=E$, $B:=F$, $C=C_{1}:=\left\{1\right\}$, $D:=L_{s}(B_{F^{*}}, \mu)$, $G:=\mathbb{R}$, and $W:=B_{F^{*}}$. Let $\mathcal{H}$, $\mathcal{H}_{1}$ and $\mathcal{H}_{2}$ be non-void families of mappings from $E$ into $F$, $F$ into $L_{s}(B_{F^{*}}, \mu)$, and $E$ into $L_{s}(B_{F^{*}}, \mu)$, respectively. Let $T$ be an operator from $E$ into $F$  and let $\mu\in \textbf{W}(B_{F^{*}})$ we consider an operator  $\textbf{\textit{J}}_{\mu}$ from $F$ into $L_{s}(B_{F^{*}}, \mu)$ assigning to $y\in F$ the function $\textit{f}_{y}$ with $\textit{f}_{y}(y^{*}):=\left\langle y, y^{*}\right\rangle$, for more information see \cite {P78}.	We define the following maps.
\begin{align}
&\textbf{H}_{1}: F\times \left\{1\right\}\times \mathbb{R}\times B_{F^{*}}\longrightarrow \mathbb{R},\  \textbf{H}_{1}(y, 1, \sigma, y^{*})=\left\langle y, y^{*}\right\rangle, \nonumber \\
&\textbf{Q}_{1}: \mathcal{H}_{1}\times F\times \left\{1\right\}\times \mathbb{R}\longrightarrow \mathbb{R},\  \textbf{Q}_{1}(\textbf{\textit{J}}_{\mu}, y, 1, \sigma)=\left\|\textit{f}_{y} | L_{s}(B_{F^{*}}, \mu)\right\|, \nonumber \\
&\textbf{Q}_{2}:\mathcal{H}_{2}\times E\times \left\{1\right\}\times \mathbb{R}\longrightarrow \mathbb{R}  ,\  \textbf{Q}_{2}(\textbf{\textit{J}}_{\mu}\circ T, x, 1, \sigma)=\left\|\textit{f}_{Tx} | L_{s}(B_{F^{*}}, \mu)\right\|, \nonumber \\
&\textbf{M}:\mathcal{H}\times E\times \left\{1\right\}\times \mathbb{R}\times B_{F^{*}}\longrightarrow \mathbb{R},\  \textbf{M}(T, x, 1, \sigma, y^{*})= \left\langle T x, y^{*}\right\rangle. \nonumber 
\end{align} 	
With these choices we obtain $\textbf{\textit{J}}_{\mu}\in\mathfrak{P}_{s}^{\textbf{H}_{1}-\textbf{Q}_{1}}\left(F, L_{s}(B_{F^{*}}, \mu)\right)$ with $\pi_{\textbf{H}_{1}\textbf{Q}_{1}, s}(\textbf{\textit{J}}_{\mu})= 1$ and satisfy Inequality (\ref{zabalawe}).
\item We put $A:=X$, $B:=Y$, $C:=X$, $C_{1}:=Y$, $D:=L_{s}(B_{Y^{\#}}, \mu)$, $G:=\mathbb{R}$, and $W:=B_{Y^{\#}}$. Let $\mathcal{H}$, $\mathcal{H}_{1}$ and $\mathcal{H}_{2}$ be non-void families of mappings from $X$ into $Y$, $Y$ into $L_{s}(B_{Y^{\#}}, \mu)$, and $X$ into $L_{s}(B_{Y^{\#}}, \mu)$, respectively. Let $T$ be a Lipschitz map from $X$ into $Y$  and let $\mu\in \textbf{W}(B_{Y^{\#}})$ we consider Lipschitz map  $\textbf{\textit{J}}_{\mu}$ from $Y$ into $L_{s}(B_{Y^{\#}}, \mu)$ assigning to points $y_{1}$ and $y_{2}$ in $Y$ the function $\textit{f}_{(y_{1}, y_{2})}$ with $\textit{f}_{(y_{1}, y_{2})}(\tilde{g}):=\left\langle y_{1}, \tilde{g}\right\rangle -\left\langle y_{2}, \tilde{g}\right\rangle$, for more information see \cite {C12}.	We define the following maps.
\begin{align}
&\textbf{H}_{1}: Y\times Y\times \mathbb{R}\times B_{Y^{\#}}\longrightarrow \mathbb{R},\  \textbf{H}_{1}(y_{1}, y_{2}, \sigma, \tilde{g})=\left\langle y_{1}, \tilde{g}\right\rangle -\left\langle y_{2}, \tilde{g}\right\rangle, \nonumber \\
&\textbf{Q}_{1}: \mathcal{H}_{1}\times Y\times Y\times \mathbb{R}\longrightarrow \mathbb{R},\  \textbf{Q}_{1}(\textbf{\textit{J}}_{\mu}, y_{1}, y_{2}, \sigma)=\left\|\textit{f}_{(y_{1}, y_{2})} | L_{s}(B_{Y^{\#}}, \mu)\right\|, \nonumber \\
&\textbf{Q}_{2}:\mathcal{H}_{2}\times X\times X\times \mathbb{R}\longrightarrow \mathbb{R}  ,\  \textbf{Q}_{2}(\textbf{\textit{J}}_{\mu}\circ T, x_{1}, x_{2}, \sigma)=\left\|\textit{f}_{(Tx_{1}, Tx_{2})} | L_{s}(B_{Y^{\#}}, \mu)\right\|, \nonumber \\
&\textbf{M}:\mathcal{H}\times X\times X\times \mathbb{R}\times B_{Y^{\#}}\longrightarrow \mathbb{R},\  \textbf{M}(T, x_{1}, x_{2}, \sigma, \tilde{g})=\left\langle Tx_{1}, \tilde{g}\right\rangle -\left\langle Tx_{2}, \tilde{g}\right\rangle. \nonumber 
\end{align} 	
With these choices we obtain $\textbf{\textit{J}}_{\mu}\in\mathfrak{P}_{s}^{\textbf{H}_{1}-\textbf{Q}_{1}}\left(Y, L_{s}(B_{Y^{\#}}, \mu)\right)$ with $\pi_{\textbf{H}_{1}\textbf{Q}_{1}, s}(\textbf{\textit{J}}_{\mu})= 1$ and satisfy Inequality (\ref{zabalawe}).
\end{enumerate}
\end{rem}
The concept of \textbf{M}--mixed $(s;q)$-summable family can be constructed as follows. 
\begin{defi}\label{sadoon}
A family $\left((\sigma_{j}, T, a_{j}, c_{j}, g_{j}, \psi)\right)_{j\in I}\subset\mathbb{R}-\left\{0\right\}\times \mathcal{H}\times A\times C\times G\times W$ is called \textbf{M}--mixed $(s;q)$-summable   if there exists a nonzero family $(\tau_{j})_{j\in I}\in \ell_{r}(I)$ such that  $\sum\limits_{I} \left|\frac{\sigma_{j}}{\tau_{j}}\right|^{s} \left|\textbf{M}(T, a_{j}, c_{j}, g_{j}, \psi)\right|^{s}<\infty$. The class of all $\textbf{M}$-mixed $(s;q)$-summable families is denoted by $\mathfrak{M}_{(s;q)}^{\textbf{M}}(\mathbb{R}-\left\{0\right\}\times \mathcal{H}\times A\times  C\times G\times W, I)$. Moreover, for a family $\left((\sigma_{j}, T, a_{j}, c_{j}, g_{j}, \psi)\right)_{j\in I}\in\mathfrak{M}_{(s;q)}^{\textbf{M}}(\mathbb{R}-\left\{0\right\}\times \mathcal{H}\times A\times C\times G\times W, I)$ define 
\begin{equation}\label{three}
\mathfrak{m}_{(s;q)}^{\textbf{M}}\left((\sigma_{j}, T, a_{j}, c_{j}, g_{j}, \psi)\right)_{j\in I}=\inf\left[\sum\limits_{I}\left|\tau_j\right|^{r}\right]^{\frac{1}{r}}\sup\limits_{\psi\in W}\left[\sum\limits_{I}\left|\frac{\sigma_{j}}{\tau_{j}}\right|^{s} \left|\textbf{M}(T, a_{j}, c_{j}, g_{j}, \psi)\right|^{s}\right]^{\frac{1}{s}},
\end{equation}
where the infimum is taken over all  nonzero families $(\tau_{j})_{j\in I}\in \ell_{r}(I)$.
\end{defi}
The next result will be used in the next section. 
\begin{lem}\label{twotwo}
Let $\left((\sigma_{j}, T, a_{j}, c_{j}, g_{j}, \psi)\right)_{j\in I}$ be an arbitrary family in $\mathfrak{M}_{(s;q)}^{\textbf{M}}(\mathbb{R}-\left\{0\right\}\times \mathcal{H}\times A\times C\times G\times W, I)$. If $q=s$, then  $\mathfrak{m}_{(s;q)}^{\textbf{M}}\left((\sigma_{j}, T, a_{j}, c_{j}, g_{j}, \psi)\right)_{j\in I}=\sup\limits_{\psi\in W}\left[\sum\limits_{I}\left|\sigma_j\right|^{q} \left|\textbf{M}(T, a_{j}, c_{j}, g_{j}, \psi)\right|^{q}\right]^{\frac{1}{q}}$.
\end{lem}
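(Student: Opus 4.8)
The plan is to unwind what the defining relation $\tfrac1r+\tfrac1s=\tfrac1q$ forces in the regime $q=s$ and then to prove the asserted identity by two opposite inequalities, each obtained from a single (essentially forced) choice of the auxiliary family $(\tau_j)_{j\in I}$. First I would note that substituting $q=s$ gives $\tfrac1r=0$, i.e.\ $r=\infty$; hence $\ell_r(I)=\ell_\infty(I)$ and the factor $\left[\sum_I|\tau_j|^r\right]^{1/r}$ in \eqref{three} is to be read as $\sup_{j\in I}|\tau_j|=\|(\tau_j)_{j\in I}\|_{\ell_\infty(I)}$. Abbreviating $M_j(\psi):=\textbf{M}(T,a_j,c_j,g_j,\psi)$, the left-hand quantity becomes $\inf_{(\tau_j)}\big(\sup_{j}|\tau_j|\big)\,\sup_{\psi\in W}\big[\sum_I|\sigma_j/\tau_j|^{s}|M_j(\psi)|^{s}\big]^{1/s}$ with the infimum over all nonzero $(\tau_j)\in\ell_\infty(I)$, while the right-hand side equals $\sup_{\psi\in W}\big[\sum_I|\sigma_j|^{s}|M_j(\psi)|^{s}\big]^{1/s}$ since $q=s$.

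For ``$\leq$'' I would test the infimum against the constant family $\tau_j\equiv1$, a nonzero element of $\ell_\infty(I)$ with $\sup_j|\tau_j|=1$. This choice collapses the corresponding expression to exactly $\sup_{\psi}\big[\sum_I|\sigma_j|^{s}|M_j(\psi)|^{s}\big]^{1/s}$, and since $\mathfrak{m}_{(s;q)}^{\textbf{M}}$ is defined as an infimum it is therefore bounded above by this value.

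For the reverse inequality ``$\geq$'' I would fix an arbitrary nonzero $(\tau_j)\in\ell_\infty(I)$, set $C:=\sup_j|\tau_j|$, and use $|\tau_j|\le C$ to obtain the pointwise bound $|\sigma_j/\tau_j|^{s}\ge C^{-s}|\sigma_j|^{s}$ for every $j$. Summing in $j$, taking the $1/s$-th power, and then $\sup_{\psi}$ yields $\sup_{\psi}\big[\sum_I|\sigma_j/\tau_j|^{s}|M_j(\psi)|^{s}\big]^{1/s}\ge C^{-1}\sup_{\psi}\big[\sum_I|\sigma_j|^{s}|M_j(\psi)|^{s}\big]^{1/s}$. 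Multiplying by $C=\sup_j|\tau_j|$ and passing to the infimum over all admissible $(\tau_j)$ gives the matching lower bound, and combining the two inequalities proves the claim.

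The only delicate point is bookkeeping rather than mathematics: one must correctly interpret the $\ell_r$-factor as the supremum norm when $r=\infty$ and check that the constant family is admissible (nonzero and in $\ell_\infty(I)$). Once this is in place both inequalities are elementary, so I expect no genuine obstacle. Note also that Conditions (\textbf{III})--(\textbf{VII}) on $\textbf{M}$ are not needed here, as the statement is a purely formal identity for the functional $\mathfrak{m}_{(s;q)}^{\textbf{M}}$.
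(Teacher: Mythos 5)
Your proposal is correct and follows essentially the same route as the paper: the reverse inequality via the elementary pointwise bound $|\sigma_j|\le\bigl(\sup_k|\tau_k|\bigr)\,|\sigma_j/\tau_j|$ followed by the infimum over admissible $(\tau_j)$, and the forward inequality by testing with the constant family $\tau_j\equiv 1$. Your added remarks (that $r=\infty$ forces the $\ell_\infty$-reading of the first factor, and that Conditions (\textbf{III})--(\textbf{VII}) are not needed) are accurate but do not change the argument.
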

\begin{proof}
Suppose that $\left((\sigma_{j}, T, a_{j}, c_{j}, g_{j}, \psi)\right)_{j\in I}\in\mathfrak{M}_{(s;q)}^{\textbf{M}}(\mathbb{R}-\left\{0\right\}\times \mathcal{H}\times A\times C\times G\times W, I)$, since $s=q$ we obtain $r=\infty$. By Definition \ref{sadoon}, there exists a family $(\tau_{j})_{j\in I}\in \ell_{\infty}(I)$  such that $\sum\limits_{I} \left|\frac{\sigma_{j}}{\tau_{j}}\right|^{q} \left|\textbf{M}(T, a_{j}, c_{j}, g_{j}, \psi)\right|^{q}<\infty$. Now we have
$$\sup\limits_{\psi\in W}\left[\sum\limits_{I} \left|\sigma_j\right|^{q} \left|\textbf{M}(T, a_{j}, c_{j}, g_{j}, \psi)\right|^{q}\right]^{\frac{1}{q}}\leq\left\|(\tau_{j})_{j\in I}\Big|\ell_{\infty}(I)\right\|\cdot\sup\limits_{\psi\in W}\left[\sum\limits_{I}\left|\frac{\sigma_{j}}{\tau_{j}}\right|^{s} \left|\textbf{M}(T, a_{j}, c_{j}, g_{j}, \psi)\right|^{s}\right]^{\frac{1}{s}}.$$   
Hence $\mathfrak{m}_{(s;q)}^{\textbf{M}}\left((\sigma_{j}, T, a_{j}, c_{j}, g_{j}, \psi)\right)_{j\in I}\geq\sup\limits_{\psi\in W}\left[\sum\limits_{I} \left|\sigma_j\right|^{q} \left|\textbf{M}(T, a_{j}, c_{j}, g_{j}, \psi)\right|^{q}\right]^{\frac{1}{q}}$. For the other direction, choose $(\tau_{j})_{j\in I}=1\in\ell_{\infty}(I)$. Then $\left\|(\tau_{j})_{j\in I}\Big|\ell_{\infty}(I)\right\|=1$ and $$\sup\limits_{\psi\in W}\left[\sum\limits_{I} \left|\sigma_j\right|^{s} \left|\textbf{M}(T, a_{j}, c_{j}, g_{j}, \psi)\right|^{s}\right]^{\frac{1}{s}}=\sup\limits_{\psi\in W}\left[\sum\limits_{I}\left|\frac{\sigma_{j}}{\tau_{j}}\right|^{s} \left|\textbf{M}(T, a_{j}, c_{j}, g_{j}, \psi)\right|^{s}\right]^{\frac{1}{s}}.$$ Then
\begin{align} 
\mathfrak{m}_{(s;q)}^{\textbf{M}}\left((\sigma_{j}, T, a_{j}, c_{j}, g_{j}, \psi)\right)_{j\in I}&=\inf\left\|(\tau_{j})_{j\in I}\Big|\ell_{\infty}(I)\right\| \sup\limits_{\psi\in W}\left[\sum\limits_{I}\left|\frac{\sigma_{j}}{\tau_{j}}\right|^{q} \left|\textbf{M}(T, a_{j}, c_{j}, g_{j}, \psi)\right|^{q}\right]^{\frac{1}{q}} \nonumber \\
&\leq\sup\limits_{\psi\in W}\left[\sum\limits_{I}\left|\sigma_j\right|^{q} \left|\textbf{M}(T, a_{j}, c_{j}, g_{j}, \psi)\right|^{q}\right]^{\frac{1}{q}}. \nonumber 
\end{align} 
\end{proof}
Inspired by analogous result in the linear theory of A. Pietsch \cite [Theorem 16.4.3] {P78} and the similar proof of \cite [Proposition 4.2] {C12} we give an important characterization of \textbf{M}--mixed $(s;q)$-summable family. 
\begin{prop} \label{two}  
Let $0<q<s<\infty$ and let \textbf{M} satisfies Condition (\textbf{III}). A family $\left((\sigma_{j}, T, a_{j},  c_{j}, g_{j}, \psi)\right)_{j\in I}\in\mathfrak{M}_{(s;q)}^{\textbf{M}}(\mathbb{R}-\left\{0\right\}\times \mathcal{H}\times A\times C\times G\times W, I)$ be \textbf{M}--mixed $(s;q)$-summable   if and only if 
\begin{equation}\label{twofive}
\left[\sum\limits_{I}\left|\sigma_j\right|^{q}\left(\int\limits_{W} \left|\textbf{M}(T, a_{j}, c_{j}, g_{j}, \psi)\right|^{s} d\mu(\psi)\right)^\frac{q}{s}\right]^{\frac{1}{q}}<\infty
\end{equation}
for every $\mu\in \textbf{W}(W)$. In this case 
$$\sup\limits_{\mu\in \textbf{W}(W)}\left[\sum\limits_{I}\left|\sigma_j\right|^{q}\left(\int\limits_{W} \left|\textbf{M}(T, a_{j}, c_{j}, g_{j}, \psi)\right|^{s} d\mu(\psi)\right)^\frac{q}{s}\right]^{\frac{1}{q}}=\mathfrak{m}_{(s;q)}^{\textbf{M}}\left((\sigma_{j}, T, a_{j}, c_{j}, g_{j}, \psi)\right)_{j\in I}.$$
\end{prop}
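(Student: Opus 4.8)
The plan is to reduce the statement to one about the nonnegative continuous functions $f_j:=|\sigma_j|\,|\textbf{M}(T,a_j,c_j,g_j,\cdot)|\in C(W)$, which are continuous by Condition (\textbf{III}) and satisfy $f_j^s\in C(W)$, and to exploit that $u:=s/q>1$ and $u':=r/q$ are conjugate exponents ($\frac1u+\frac1{u'}=1$), a direct consequence of $\frac1r+\frac1s=\frac1q$. In this language both objects simplify: for $\mu\in\textbf{W}(W)$ the $j$th summand of the quantity in (\ref{twofive}) equals $(\int_W f_j^s\,d\mu)^{q/s}$, so the left-hand side of the claimed identity is $C:=\sup_{\mu\in\textbf{W}(W)}(\sum_I(\int_W f_j^s\,d\mu)^{q/s})^{1/q}$ and condition (\ref{twofive}) is finiteness of $\sum_I(\int_W f_j^s\,d\mu)^{q/s}$ for every $\mu$, while the bracketed sum in (\ref{three}) is $\sum_I(f_j(\psi)/|\tau_j|)^s$. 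First I would record these reformulations together with the exponent identity $\frac{q}{s}-1=-\frac{q}{r}$, which is the algebraic fact driving the whole computation.

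For the inequality $C\le\mathfrak{m}_{(s;q)}^{\textbf{M}}$, which also yields the forward implication, I would fix an admissible $(\tau_j)\in\ell_r(I)$ and any $\mu\in\textbf{W}(W)$, put $S:=\sup_{\psi\in W}\sum_I(f_j(\psi)/|\tau_j|)^s$, and apply H\"older's inequality with the conjugate exponents $u'=r/q$ and $u=s/q$ to the products $|\tau_j|^q\cdot(\int_W(f_j/|\tau_j|)^s\,d\mu)^{q/s}$, noting that each such product equals $(\int_W f_j^s\,d\mu)^{q/s}$. Interchanging $\sum_I$ and $\int_W$ by Tonelli's theorem and using that $\mu$ is a probability measure, so that $\int_W\sum_I(f_j/|\tau_j|)^s\,d\mu\le S$, gives $\sum_I(\int_W f_j^s\,d\mu)^{q/s}\le\|\tau\|_r^q\,S^{q/s}$. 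Taking the supremum over $\mu$ and then the infimum over $(\tau_j)$ in the sense of Definition \ref{sadoon} establishes $C\le\mathfrak{m}_{(s;q)}^{\textbf{M}}$; in particular an $\textbf{M}$--mixed $(s;q)$-summable family has $\mathfrak{m}_{(s;q)}^{\textbf{M}}<\infty$ and satisfies (\ref{twofive}) for every $\mu$.

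The substantive direction is the reverse inequality, which I would prove first for a finite index set $I$. Since $W$ is compact and each $f_j^s\in C(W)$, the set $\textbf{W}(W)$ is weak-$*$ compact and $\mu\mapsto G(\mu):=\sum_I(\int_W f_j^s\,d\mu)^{q/s}$ is weak-$*$ continuous, so $G$ attains its maximum at a Pietsch-type measure $\mu_0$; write $\alpha_j:=\int_W f_j^s\,d\mu_0$ and $C^q:=G(\mu_0)$. The heart of the proof is the variational (Euler--Lagrange) inequality: differentiating $t\mapsto G((1-t)\mu_0+t\delta_\psi)$ at $t=0^+$, maximality forces $\sum_I\alpha_j^{q/s-1}f_j(\psi)^s\le\sum_I\alpha_j^{q/s}=C^q$ for every $\psi\in W$. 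Choosing the weights $\tau_j:=\alpha_j^{q/(rs)}$ and invoking $\frac{q}{s}-1=-\frac{q}{r}$, this inequality reads exactly $\sum_I(f_j(\psi)/\tau_j)^s\le C^q$ for all $\psi$, while $\|\tau\|_r^r=\sum_I\alpha_j^{q/s}=C^q$; multiplying the two bounds yields $\mathfrak{m}_{(s;q)}^{\textbf{M}}\le\|\tau\|_r\cdot(C^q)^{1/s}=C^{q/r+q/s}=C$. With the easy direction this gives $\mathfrak{m}_{(s;q)}^{\textbf{M}}=C$ for finite $I$, and in particular $\textbf{M}$--mixed $(s;q)$-summability.

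I expect the two main obstacles to both concern this variational step. First, the weights $\tau_j=\alpha_j^{q/(rs)}$ must be nonzero, so I must exclude $\alpha_j=0$ whenever $f_j\not\equiv0$: this is precisely where the hypothesis $q<s$ enters, since $t\mapsto t^{q/s}$ has infinite right derivative at $0$, so perturbing $\mu_0$ towards any $\psi$ with $f_j(\psi)>0$ would strictly increase $G$ and contradict maximality, whereas the trivial case $f_j\equiv0$ is absorbed by assigning arbitrarily small weights. Second, the passage from finite to arbitrary $I$ requires care: since all terms are nonnegative, both $\mathfrak{m}_{(s;q)}^{\textbf{M}}$ and $C$ are suprema of their finite-subfamily analogues, so a routine exhaustion combined with a weak-$*$ compactness (diagonal) argument produces a single global weight sequence and transfers the finite-$I$ identity to the general case; this is also the step that ties finiteness of (\ref{twofive}) to genuine $\textbf{M}$--mixed $(s;q)$-summability, completing the equivalence.
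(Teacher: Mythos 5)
Your proposal is correct in substance, but it takes a genuinely different route from the paper's, so let me compare. The easy direction (H\"older plus the fact that $\mu$ is a probability measure, giving $C\leq\mathfrak{m}_{(s;q)}^{\textbf{M}}$) is the same in both. For the hard direction the paper follows Pietsch: with $u=r/q$, $v=s/q$ it forms the compact convex set $Z=\left\{\xi\in\ell_{u}(I):\sum_{I}\xi_j^{u}\leq N^{q},\ \xi_j\geq 0\right\}$ and the family of continuous convex functions $\phi_{\mu,\epsilon}(\xi)=\sum_{I}\left|\sigma_j\right|^{s}\left(\xi_j+\epsilon\right)^{-v}\int_{W}\left|\textbf{M}\right|^{s}d\mu$, and invokes Ky Fan's minimax lemma (Lemma E.4.2 in Pietsch's book) to get a single $\xi^{0}\in Z$ with $\phi(\xi^{0})\leq N^{q}$ for \emph{all} measures $\mu$ simultaneously; the weights are $\tau_j=(\xi_j^{0}+\epsilon)^{1/q}$, the $+\epsilon$ regularization playing the role that the hypothesis $q<s$ plays for you. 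This handles an arbitrary index set $I$ in one stroke. You instead maximize $G(\mu)=\sum_{I}\left(\int_{W}f_j^{s}\,d\mu\right)^{q/s}$ over the weak-$*$ compact set $\textbf{W}(W)$ and read the weights off the first-order (Euler--Lagrange) inequality at the maximizer $\mu_0$. Your finite-$I$ computation is airtight: the variational inequality $\sum_j\alpha_j^{q/s-1}f_j(\psi)^{s}\leq C^{q}$, the choice $\tau_j=\alpha_j^{q/(rs)}$, the identities $\|\tau\|_r^r=C^q$ and $q/s-1=-q/r$, and the use of $q<s$ (infinite slope of $t^{q/s}$ at $0$) to exclude $\alpha_j=0$ are all exactly right, and they buy you an argument that needs no minimax lemma.

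The one place where your argument is thinner than the paper's is the passage to infinite $I$, and you should not call it routine. For infinite $I$ the functional $G$ is only lower semicontinuous (a supremum of weak-$*$ continuous partial sums), so a maximizer need not exist and the reduction to finite subfamilies is genuinely necessary; moreover your claim that $\mathfrak{m}_{(s;q)}^{\textbf{M}}$ is the supremum of its finite-subfamily analogues is not a formality --- producing one global weight sequence from the per-subfamily ones is precisely the content of the theorem. The net argument you gesture at does close the gap: take maximizers $\mu_0^{F}$ over finite $F\subset I$, pass to a weak-$*$ cluster point $\mu_0$, and note that your own variational inequality gives the uniform lower bound $\alpha_j^{F}\geq\left(f_j(\psi)^{s}/C^{q}\right)^{s/(s-q)}$ for every $\psi$ with $f_j(\psi)>0$, so the limiting $\alpha_j$ remain strictly positive and both estimates ($\sum_I\alpha_j^{q/s}\leq C^q$ and $\sum_I\alpha_j^{q/s-1}f_j(\psi)^s\leq C^q$) survive the limit by monotone exhaustion over finite subsets. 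With that paragraph written out, your proof is complete; as submitted, this step is a sketch rather than a proof, whereas the paper's minimax route never has to confront it.
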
 
\begin{proof}
Suppose that the family $\left((\sigma_{j}, T, a_{j}, c_{j}, g_{j}, \psi)\right)_{j\in I}$ satisfies (\ref{twofive}). Define a number $N$ as follows.
$$N=\sup\limits_{\mu\in \textbf{W}(W)}\left[\sum\limits_{I}\left|\sigma_j\right|^{q}\left(\int\limits_{W} \left|\textbf{M}(T, a_{j}, c_{j}, g_{j}, \psi)\right|^{s} d\mu(\psi)\right)^\frac{q}{s}\right]^{\frac{1}{q}}.$$
Then $N$ is finite. Put $u=\frac{r}{q}$ and $v=\frac{s}{q}$. Then $\frac{1}{u}+\frac{1}{v}=1$. We now consider the compact, convex subset  $$Z=\left\{\xi=\left(\xi_j\right)_{j\in I}: \sum\limits_{I}\xi_j^{u}\leq N^{q}\ and \; \xi_j\geq 0\right\}$$
of $\ell_{u}(I)$. Notice that the equation $$\phi(\xi)=\sum\limits_{I}\left|\sigma_j\right|^{s}\left(\xi_j +\epsilon\right)^{- v}\cdot\int\limits_{W} \left|\textbf{M}(T, a_{j}, c_{j}, g_{j}, \psi)\right|^{s} d\mu(\psi),$$
where $\mu\in \textbf{W}(W)$, $\epsilon > 0$, defines a continuous convex function $\phi$ on $Z$. Take the special family $\left(\xi_j\right)_{j\in\mathbb{N}}$ with $$\xi_j=\left(\int\limits_{W}\left|\sigma_j\right|^{s} \left|\textbf{M}(T, a_{j}, c_{j}, g_{j}, \psi)\right|^{s} d\mu(\psi)\right)^{\frac{1}{u\cdot v}}.$$
Then $\xi\in Z$ and $\phi(\xi)\leq N^{q}$. Since the collection $\mathfrak{Q}$ of all functions $\phi$ obtained in this way is concave, by \cite [Lemma E.4.2] {P78} we can find $\xi^{0}\in  Z$ such that $\phi(\xi^{0})\leq N^{q}$ for all $\phi\in\mathfrak{Q}$. In particular, considering the Dirac measure $\delta_{\psi}$ at   $\psi\in W$ we obtain 
$$\sum\limits_{I}\left|\sigma_j\right|^{s}\left(\xi_{j}^{0} +\epsilon\right)^{- v} \left|\textbf{M}(T, a_{j}, c_{j}, g_{j}, \psi)\right|^{s}\leq N^{q}.$$
Set $\tau_j(\epsilon)=\left(\xi^{0}_{j}+\epsilon\right)^{\frac{1}{q}}$. Then
\begin{equation} 
\left[\sum\limits_{I}\left|\tau_j\right|^{r}\right]^{\frac{1}{r}}=\lim_{\epsilon\rightarrow 0 ^{+}}\left[\sum\limits_{I}\left|\tau_{j}(\epsilon)\right|^{r}\right]^{\frac{1}{r}}=\left[\sum\limits_{I}(\xi^{0}_{j})^{\frac{r}{q}}\right]^{\frac{1}{r}}=\left[\sum\limits_{I}(\xi^{0}_{j})^{u}\right]^{\frac{1}{r}}\leq N^{\frac{q}{r}}= N^{\frac{1}{u}},
\end{equation} 
and for $\psi\in W$
\begin{align} 
\left[\sum\limits_{I}\left|\frac{\sigma_j}{\tau_{j}}\right|^{s} \left|\textbf{M}(T, a_{j}, c_{j}, g_{j}, \psi)\right|^{s}\right]^{\frac{1}{s}}&=\lim_{\epsilon\rightarrow 0 ^{+}}\left[\sum\limits_{I}\left|\frac{\sigma_j}{\tau_{j}(\epsilon)}\right|^{s} \left|\textbf{M}(T, a_{j}, c_{j}, g_{j}, \psi)\right|^{s}\right]^{\frac{1}{s}}    \nonumber \\
&=\lim_{\epsilon\rightarrow 0 ^{+}}\left[\sum\limits_{I}\left|\frac{\sigma_j^{s}}{(\xi^{0}_{j} + \epsilon)^{v}}\right| \; \left|\textbf{M}(T, a_{j}, c_{j}, g_{j}, \psi)\right|^{s}\right]^{\frac{1}{s}}\leq N^{\frac{q}{s}}= N^{\frac{1}{v}}.   \nonumber
\end{align}
Hence $$\left[\sum\limits_{I}\left|\tau_j\right|^{r}\right]^{\frac{1}{r}} \left[\sum\limits_{I}\left|\frac{\sigma_j}{\tau_{j}}\right|^{s} \left|\textbf{M}(T, a_{j}, c_{j}, g_{j}, \psi)\right|^{s}\right]^{\frac{1}{s}}\leq N.$$ This proves the necessity of  the above condition. Conversely, suppose that a family $\left((\sigma_{j}, T, a_{j}, c_{j}, g_{j}, \psi)\right)_{j\in I}$ is $\textbf{M}$-mixed $(s;q)$-summable. Take any family $(\tau_{j})_{j\in I}\in \ell_{r}(I)$ such that $\sum\limits_{I}\left|\frac{\sigma_{j}}{\tau_{j}}\right|^{s} \left|\textbf{M}(T, a_{j}, c_{j}, g_{j}, \psi)\right|^{s}<\infty$. Applying H\"older inequality we obtain 
\begin{align} 
\left[\sum\limits_{I}\left|\sigma_j\right|^{q}\left(\int\limits_{W} \left|\textbf{M}(T, a_{j}, c_{j}, g_{j}, \psi)\right|^{s} d\mu(\psi)\right)^\frac{q}{s}\right]^{\frac{1}{q}}&\leq\left[\sum\limits_{I}\left|\tau_j\right|^{r}\right]^{\frac{1}{r}}\sup\limits_{\psi\in W}\left[\sum\limits_{I}\left|\frac{\sigma_{j}}{\tau_{j}} \right|^{s} \left|\textbf{M}(T, a_{j}, c_{j}, g_{j}, \psi)\right|^{s}\right]^{\frac{1}{s}}  \nonumber
\end{align}
whenever $\mu\in \textbf{W}(W)$. This proves the sufficiency of the above condition.
\end{proof}
\section{\textbf{H}--\textbf{M}-$\left((s, q), p\right)$-mixing maps} \label{IIIII} 
Throughout this section, we assume that $V$ be a vector space over the field $\mathbb{K}$ and let $\mathcal{P}=\left(\textbf{P}_{\iota}\right)_{\iota =1}^{r}$ be a finite  family  of  semi-norms on  $V$. The topology induced by a  finite family  of  semi-norms on  $V$ is denoted by $\mathcal{P}$-topology on $V$. If $\textbf{P}_{\iota}$ $\left(1\leq\iota\leq r\right)$ be a finite family of semi-norms on a vector space $V$, then the function $\textbf{P}=\max \textbf{P}_{\iota}$ defined by
$$\textbf{P}(v)=\max\limits_{1\leq\iota\leq r}\textbf{P}_{\iota}(v)$$
be also a semi-norm on $V$. 

From  \cite [Proposition 2.16] {GA66}  it is known that the topology associated with the semi-norm $\textbf{P}$ is identical with the $\mathcal{P}$-topology on $V$. Suppose that $\mathfrak{B}_{\textbf{P}}$ be a compact unit $\textbf{P}$-ball defined as follows. $$\mathfrak{B}_{\textbf{P}}=\left\{v\in V : \textbf{P}(v)\leq 1\right\}.$$
The concept of \textbf{H}--\textbf{M}-$\left((s; q), p\right)$-mixing map can be constructed as follows.
\begin{defi}\label{hmdy}
Let $0<q\leq s\leq\infty$ and $p\leq q$. A map $T$ from $A$ into $B$ is called \textbf{H}--\textbf{M}-$\left((s; q), p\right)$-mixing if there is a constant $\delta\geq 0$ such that 
$$\mathfrak{m}_{(s;q)}^{\textbf{M}}\left((\sigma_{j}, T, a_{j}, c_{j}, g_{j}, \psi)\right)_{j=1}^{m}\leq \delta\cdot\sup\limits_{\varphi\in K}\left[\sum\limits_{j=1}^{m} \left|\sigma_j\right|^{p} \left|\textbf{H}(a_{j}, c_{j}, g_{j}, \varphi)\right|^{p}\right]^{\frac{1}{p}}$$
for all nonzero $\sigma_{1}, \cdots,\sigma_{m}\in\mathbb{R}$, $a_{1}, \cdots,a_{m}\in A$, $c_{1}, \cdots,c_{m}\in C$, $g_{1}, \cdots,g_{m}\in G$ and $m\in\mathbb{N}$. The infimum of such constants $\delta$ is denoted by $\textbf{H}\textbf{M}_{\left((s; q), p\right)}(T)$. Let us denote by $\mathfrak{M}_{\left((s; q), p\right)}^{\textbf{H}-\textbf{M}}\left(A, B\right)$ the class of all \textbf{H}--\textbf{M}-$\left((s; q), p\right)$-mixing maps from $A$ into $B$.
\end{defi}
Inspired by analogous result in the linear theory of A. Pietsch \cite [Theorem 20.1.4] {P78} and the similar proof of \cite [Theorem 4.1] {C12} we give the following characterization of \textbf{H}--\textbf{M}-$\left((s; q), p\right)$-mixing map. 
\begin{prop}\label{aaa}%Theorem 4.1.
Let $0<q< s<\infty$ and $p\leq q$ and let \textbf{H} and \textbf{M} satisfy Conditions (\textbf{I}), (\textbf{III}) and (\textbf{IV}), respectively. A map $T$ from $A$ into $B$ is \textbf{H}--\textbf{M}-$\left((s; q), p\right)$-mixing if and only if there is a constant $\delta\geq 0$ such that 
\begin{align}\label{pppppppppppppp}
\Bigg[\sum\limits_{j=1}^{m}\left|\sigma_j\right|^{q}\bigg[\sum\limits_{k=1}^{n}\left|\textbf{M}(T, a_{j}, c_{j}, g_{j}, v_{k})\right|^{s}\bigg]^\frac{q}{s}\Bigg]^{\frac{1}{q}}&\leq\delta\cdot\sup\limits_{\varphi\in K}\left[\sum\limits_{j=1}^{m}\left|\sigma_j\right|^{p} \left|\textbf{H}(a_{j}, c_{j}, g_{j}, \varphi)\right|^{p}\right]^{\frac{1}{p}} \nonumber \\
&\cdot\left[\sum\limits_{k=1}^{n} \textbf{P}(v_{k})^{s}\right]^{\frac{1}{s}}  
\end{align} 
for every nonzero $\sigma_{1}, \cdots,\sigma_{m}\in\mathbb{R}$, $a_{1}, \cdots, a_{m}\in A$, $c_{1}, \cdots, c_{m}\in C$, $g_{1}, \cdots, g_{m}\in G$; $v_1,\cdots, v_n\in V$ and $m,n\in\mathbb{N}$. Moreover 
$$\textbf{H}\textbf{M}_{\left((s; q), p\right)}(T)=\inf \delta.$$
\end{prop}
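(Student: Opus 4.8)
The plan is to transfer both the defining inequality of an $\textbf{H}$--$\textbf{M}$-$((s;q),p)$-mixing map and inequality (\ref{pppppppppppppp}) into the common integral form furnished by Proposition \ref{two}, and then to pass between the integral $\int_{W}|\textbf{M}(T,a_{j},c_{j},g_{j},\psi)|^{s}\,d\mu(\psi)$ and the finite sum $\sum_{k}|\textbf{M}(T,a_{j},c_{j},g_{j},v_{k})|^{s}$ by means of the homogeneity Condition (\textbf{IV}), upon the identification $W=\mathfrak{B}_{\textbf{P}}$ (so that $\textbf{W}(W)$ consists of the Borel probability measures on the compact unit ball, while Condition (\textbf{IV}) extends $\textbf{M}$ from $W$ to all of $V$). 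Note that the $\textbf{H}$-term on the right-hand side is literally identical in the two formulations, so Condition (\textbf{I}) serves only to ensure that $\sup_{\varphi\in K}[\sum_{j}|\sigma_{j}|^{p}|\textbf{H}(a_{j},c_{j},g_{j},\varphi)|^{p}]^{1/p}$ is an attained, finite supremum; all of the substantive work takes place on the left-hand side.

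First I would prove the forward implication. Assume $T$ is mixing with constant $\delta$ and fix $v_{1},\dots,v_{n}\in V$, discarding any $v_{k}=0$ (whose contribution vanishes since Condition (\textbf{IV}) forces $\textbf{M}(T,a,c,g,0)=0$). Put $w_{k}=v_{k}/\textbf{P}(v_{k})\in\mathfrak{B}_{\textbf{P}}=W$ and form the Borel probability measure $\mu=(\sum_{l}\textbf{P}(v_{l})^{s})^{-1}\sum_{k}\textbf{P}(v_{k})^{s}\,\delta_{w_{k}}$. By Condition (\textbf{IV}) one has $\textbf{P}(v_{k})^{s}|\textbf{M}(T,a_{j},c_{j},g_{j},w_{k})|^{s}=|\textbf{M}(T,a_{j},c_{j},g_{j},v_{k})|^{s}$, so integration against $\mu$ reproduces exactly $(\sum_{l}\textbf{P}(v_{l})^{s})^{-1}\sum_{k}|\textbf{M}(T,a_{j},c_{j},g_{j},v_{k})|^{s}$. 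Inserting this particular $\mu$ into the equality of Proposition \ref{two}, bounding it by the supremum over all measures (which equals $\mathfrak{m}_{(s;q)}^{\textbf{M}}$ and hence, by the mixing hypothesis, by $\delta\cdot\sup_{\varphi\in K}[\cdots]^{1/p}$), and then clearing the factor $(\sum_{l}\textbf{P}(v_{l})^{s})^{1/s}$, yields (\ref{pppppppppppppp}) with the same constant $\delta$.

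For the converse I would assume (\ref{pppppppppppppp}) holds with constant $\delta$; by Proposition \ref{two} it then suffices to bound $[\sum_{j}|\sigma_{j}|^{q}(\int_{W}|\textbf{M}(T,a_{j},c_{j},g_{j},\psi)|^{s}\,d\mu(\psi))^{q/s}]^{1/q}$ for each fixed $\mu\in\textbf{W}(W)$. This is where the main obstacle lies: the arbitrary measure $\mu$ must be replaced by a finitely supported one. Since each map $\psi\mapsto|\textbf{M}(T,a_{j},c_{j},g_{j},\psi)|^{s}$ $(j=1,\dots,m)$ is continuous on the compact space $W$ by Condition (\textbf{III}), and the finitely supported probability measures are weak-$*$ dense in $\textbf{W}(W)$, for every $\epsilon>0$ there is $\mu'=\sum_{k}\beta_{k}\delta_{w_{k}}$ with $\beta_{k}\geq0$, $\sum_{k}\beta_{k}=1$, $w_{k}\in W$, satisfying $\left|\int_{W}|\textbf{M}(T,a_{j},c_{j},g_{j},\psi)|^{s}\,d\mu(\psi)-\sum_{k}\beta_{k}|\textbf{M}(T,a_{j},c_{j},g_{j},w_{k})|^{s}\right|<\epsilon$ simultaneously for all $j$. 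Setting $v_{k}=\beta_{k}^{1/s}w_{k}\in V$, Condition (\textbf{IV}) gives $|\textbf{M}(T,a_{j},c_{j},g_{j},v_{k})|^{s}=\beta_{k}|\textbf{M}(T,a_{j},c_{j},g_{j},w_{k})|^{s}$ and $\sum_{k}\textbf{P}(v_{k})^{s}=\sum_{k}\beta_{k}\textbf{P}(w_{k})^{s}\leq\sum_{k}\beta_{k}=1$. Applying (\ref{pppppppppppppp}) to these $v_{k}$ controls the finite sums by $\delta\cdot\sup_{\varphi\in K}[\cdots]^{1/p}$; letting $\epsilon\to0$ (using continuity of the finite operations $t\mapsto t^{q/s}$ and $t\mapsto t^{1/q}$) recovers the integral, and taking the supremum over $\mu$ delivers $\mathfrak{m}_{(s;q)}^{\textbf{M}}\leq\delta\cdot\sup_{\varphi\in K}[\cdots]^{1/p}$, i.e.\ $T$ is mixing with the same $\delta$.

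Because each implication preserves the constant $\delta$, the set of admissible constants for (\ref{pppppppppppppp}) and that for the mixing definition coincide, whence $\textbf{H}\textbf{M}_{((s;q),p)}(T)=\inf\delta$. The points requiring care are the weak-$*$ approximation step (it must be uniform over the finitely many indices $j$, and the normalization $\sum_{k}\beta_{k}=1$ must transfer to $\sum_{k}\textbf{P}(v_{k})^{s}\leq1$) and the bookkeeping of the homogeneity normalization, so that the weights $\textbf{P}(v_{k})^{s}$ cancel correctly against the Dirac masses on both sides.
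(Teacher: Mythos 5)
Your proposal is correct and follows essentially the same route as the paper's own proof: the forward direction via the discrete probability measure $\mu=\bigl(\sum_{l}\textbf{P}(v_{l})^{s}\bigr)^{-1}\sum_{k}\textbf{P}(v_{k})^{s}\,\delta_{v_{k}/\textbf{P}(v_{k})}$ combined with Proposition \ref{two}, and the converse via weak-$*$ density of finitely supported measures in $\textbf{W}(\mathfrak{B}_{\textbf{P}})$ followed by taking the supremum over $\mu$ and invoking Proposition \ref{two} again. Your write-up merely makes explicit two steps the paper leaves implicit (the identification of finite sums over $v_{k}\in V$ with integrals against discrete measures via $v_{k}=\beta_{k}^{1/s}w_{k}$, and the uniform-in-$j$ $\epsilon$-approximation underlying the density argument), which is a faithful elaboration rather than a different method.
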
%Theorem 4.1.
\begin{proof} 
Assume that $T$ is \textbf{H}--\textbf{M}-$\left((s, q), p\right)$-mixing map. Consider $v_1,\cdot\cdot\cdot, v_n\in V$ and define the discrete probability  $\mu=\sum\limits_{k=1}^{n} t_k \delta_k$, where $t_k=\textbf{P}(v_{k})^{s}\cdot\left[\sum\limits_{h=1}^{n} \textbf{P}(v_{h})^{s}\right]^{-1}$ and $\delta_k$ denotes the Dirac measure at $y_k =\frac{v_k}{\textbf{P}(v_{k})}\in \mathfrak{B}_{\textbf{P}}$; $k=1,\cdot\cdot\cdot, n$. Then $\mu\in \textbf{W}(\mathfrak{B}_{\textbf{P}})$. For $\sigma_{1}, \cdots,\sigma_{m}\in\mathbb{R}$, $a_{1}, \cdots, a_{m}\in A$, $c_{1}, \cdots,c_{m}\in C$, and $g_{1}, \cdots,g_{m}\in G$ we obtain from Proposition \ref{two} that
\begin{align}
\Bigg[\sum\limits_{j=1}^{m}\left|\sigma_j\right|^{q}\bigg[\sum\limits_{k=1}^{n}\left|\textbf{M}(T, a_{j}, c_{j}, g_{j}, v_{k})\right|^{s}\bigg]^\frac{q}{s}\Bigg]^{\frac{1}{q}}&=\Bigg[\sum\limits_{j=1}^{m}\left|\sigma_j\right|^{q}\bigg[\int\limits_{\mathfrak{B}_{\textbf{P}}}\left|\textbf{M}(T, a_{j}, c_{j}, g_{j}, v)\right|^{s} d\mu(v)\bigg]^\frac{q}{s}\Bigg]^{\frac{1}{q}}\cdot\left[\sum\limits_{k=1}^{n} \textbf{P}(v_{k})^{s}\right]^{\frac{1}{s}}  \nonumber \\ 
&\leq\mathfrak{m}_{(s;q)}^{\textbf{M}}\left((\sigma_{j}, T, a_{j}, c_{j}, g_{j}, v)\right)_{j=1}^{m}\cdot\left[\sum\limits_{k=1}^{n} \textbf{P}(v_{k})^{s}\right]^{\frac{1}{s}}\nonumber \\
&\leq\textbf{H}\textbf{M}_{\left((s; q), p\right)}(T)\cdot\sup\limits_{\varphi\in K}\left[\sum\limits_{j=1}^{m} \left|\sigma_j\right|^{p} \left|\textbf{H}(a_{j}, c_{j}, g_{j}, \varphi)\right|^{p}\right]^{\frac{1}{p}} \nonumber \\
&\cdot\left[\sum\limits_{k=1}^{n} \textbf{P}(v_{k})^{s}\right]^{\frac{1}{s}}. \nonumber
\end{align} 
To show the converse, notice that (\ref{pppppppppppppp}) means 
\begin{equation}\label{one}
\Bigg[\sum\limits_{j=1}^{m}\left|\sigma_j\right|^{q}\bigg[\int\limits_{\mathfrak{B}_{\textbf{P}}}\left|\textbf{M}(T, a_{j}, c_{j}, g_{j}, v)\right|^{s} d\mu(v)\bigg]^\frac{q}{s}\Bigg]^{\frac{1}{q}}\leq \delta\cdot\sup\limits_{\varphi\in K}\left[\sum\limits_{j=1}^{m}\left|\sigma_j\right|^{p} \left|\textbf{H}(a_{j}, c_{j}, g_{j}, \varphi)\right|^{p}\right]^{\frac{1}{p}}
\end{equation}
for every discrete probability measure $\mu$ on $\mathfrak{B}_{\textbf{P}}$ and $\sigma_{1}, \cdots,\sigma_{m}\in\mathbb{R}$, $a_{1}, \cdots, a_{m}\in A$, $c_{1}, \cdots,c_{m}\in C$, and $g_{1}, \cdots,g_{m}\in G$. Since the set of all finitely supported probability measures on $\mathfrak{B}_{\textbf{P}}$ be $\sigma\left(C(\mathfrak{B}_{\textbf{P}})^{\ast},C(\mathfrak{B}_{\textbf{P}})\right)$-dense in the set of all probability measures on $\mathfrak{B}_{\textbf{P}}$, it follows that (\ref{one}) holds for all probability measures $\mu$ on $\mathfrak{B}_{\textbf{P}}$ and $\sigma_{1}, \cdots,\sigma_{m}\in\mathbb{R}$, $a_{1}, \cdots, a_{m}\in A$, $c_{1}, \cdots,c_{m}\in C$, $g_{1}, \cdots,g_{m}\in G$. Taking the supremum over $\mu\in W(\mathfrak{B}_{\textbf{P}})$ on the left side of (\ref{one}) and using Proposition \ref{two}, we obtain $$\mathfrak{m}_{(s;q)}^{\textbf{M}}\left((\sigma_{j}, T, a_{j}, c_{j}, g_{j}, v)\right)_{j=1}^{m}\leq \delta\cdot\sup\limits_{\varphi\in K}\left[\sum\limits_{j=1}^{m}\left|\sigma_j\right|^{p} \left|\textbf{H}(a_{j}, c_{j}, g_{j}, \varphi)\right|^{p}\right]^{\frac{1}{p}}.$$
\end{proof}
The following multiplication formula represents the main-point of the theory of \textbf{H}--\textbf{M}-$\left((s; q), q\right)$-mixing maps and it is somewhat inspired by analogous result in the linear theory.
\begin{prop}\label{abuleldfddff}  
Let $0<q\leq s\leq\infty$. If the maps $\textbf{Q}_{1}$, $\textbf{Q}_{2}$, $\textbf{H}_{1}$ and $\textbf{M}$ satisfy Conditions  (\textbf{II}) and (\textbf{VI}), respectively, then $$\left[\mathfrak{P}_{s}^{\textbf{H}_{1}-\textbf{Q}_{1}}\left(B, D\right), {(\textbf{H}_{1}\textbf{Q}_{1})}_{s}\right]\circ\left[\mathfrak{M}_{\left((s; q), q\right)}^{\textbf{H}-\textbf{M}}\left(A, B\right), \textbf{H}\textbf{M}_{\left((s; q), q\right)}\right]\subseteq\left[\mathfrak{P}_{q}^{\textbf{H}-\textbf{Q}_{2}}\left(A, C\right), {(\textbf{H}\textbf{Q}_{2})}_{q}\right].$$
\end{prop}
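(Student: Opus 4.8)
The plan is to establish the quantitative estimate
$$
{(\textbf{H}\textbf{Q}_{2})}_{q}(S\circ T)\leq {(\textbf{H}_{1}\textbf{Q}_{1})}_{s}(S)\cdot \textbf{H}\textbf{M}_{((s;q),q)}(T),
$$
which simultaneously yields the set inclusion and the required domination of the composition norm. So fix $S\in\mathfrak{P}_{s}^{\textbf{H}_{1}-\textbf{Q}_{1}}(B,D)$ and $T\in\mathfrak{M}_{((s;q),q)}^{\textbf{H}-\textbf{M}}(A,B)$, together with an arbitrary finite system of nonzero scalars $\sigma_{1},\ldots,\sigma_{m}\in\mathbb{R}$ and points $a_{j}\in A$, $c_{j}\in C$, $g_{j}\in G$. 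The aim is to bound $\big[\sum_{j}|\sigma_{j}|^{q}|\textbf{Q}_{2}(S\circ T,a_{j},c_{j},g_{j})|^{q}\big]^{1/q}$ by $\sup_{\varphi\in K}\big[\sum_{j}|\sigma_{j}|^{q}|\textbf{H}(a_{j},c_{j},g_{j},\varphi)|^{q}\big]^{1/q}$ times the displayed constant, since this is exactly the defining inequality of $\mathfrak{P}_{q}^{\textbf{H}-\textbf{Q}_{2}}(A,C)$.

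First I would invoke Condition (\textbf{II}) to pass from the composite to the outer map, replacing each $\textbf{Q}_{2}(S\circ T,a_{j},c_{j},g_{j})$ by $\textbf{Q}_{1}(S,Ta_{j},c_{j},g_{j})$. Because $S$ is $\textbf{H}_{1}$--$\textbf{Q}_{1}$-abstract $s$-summing, Proposition \ref{msfgsfgfgf1} applied to $S$ (with $W$ playing the role of $K$) supplies a Borel probability measure $\mu\in\textbf{W}(W)$ with $|\textbf{Q}_{1}(S,Ta_{j},c_{j},g_{j})|\leq {(\textbf{H}_{1}\textbf{Q}_{1})}_{s}(S)\big[\int_{W}|\textbf{H}_{1}(Ta_{j},c_{j},g_{j},\psi)|^{s}\,d\mu(\psi)\big]^{1/s}$ for each $j$. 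Raising to the power $q$, weighting by $|\sigma_{j}|^{q}$, summing, and then using Condition (\textbf{VI}) to replace $\textbf{H}_{1}(Ta_{j},c_{j},g_{j},\psi)$ by $\textbf{M}(T,a_{j},c_{j},g_{j},\psi)$ inside each integral gives
$$
\Big[\sum_{j}|\sigma_{j}|^{q}|\textbf{Q}_{2}(S\circ T,a_{j},c_{j},g_{j})|^{q}\Big]^{1/q}\leq {(\textbf{H}_{1}\textbf{Q}_{1})}_{s}(S)\Big[\sum_{j}|\sigma_{j}|^{q}\Big(\int_{W}|\textbf{M}(T,a_{j},c_{j},g_{j},\psi)|^{s}\,d\mu(\psi)\Big)^{q/s}\Big]^{1/q}.
$$

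To close the argument I would recognize the bracketed factor on the right as one of the quantities occurring in Proposition \ref{two}: for the single measure $\mu\in\textbf{W}(W)$ it is dominated by the supremum over all such measures, which Proposition \ref{two} identifies with $\mathfrak{m}_{(s;q)}^{\textbf{M}}\big((\sigma_{j},T,a_{j},c_{j},g_{j},\psi)\big)_{j=1}^{m}$. Applying at last the mixing hypothesis on $T$ (Definition \ref{hmdy} with $p=q$) bounds this mixed functional by $\textbf{H}\textbf{M}_{((s;q),q)}(T)\cdot\sup_{\varphi\in K}\big[\sum_{j}|\sigma_{j}|^{q}|\textbf{H}(a_{j},c_{j},g_{j},\varphi)|^{q}\big]^{1/q}$, and concatenating the three estimates produces the claimed inequality with constant ${(\textbf{H}_{1}\textbf{Q}_{1})}_{s}(S)\cdot\textbf{H}\textbf{M}_{((s;q),q)}(T)$; this constant is an admissible $\delta$ for $S\circ T$, whence the norm bound follows. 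The main obstacle is the reliance on Proposition \ref{two}, which is stated only for $0<q<s<\infty$, so the boundary case $q=s$ must be treated separately: there Lemma \ref{twotwo} collapses the mixed functional to $\sup_{\psi\in W}\big[\sum_{j}|\sigma_{j}|^{q}|\textbf{M}(T,a_{j},c_{j},g_{j},\psi)|^{q}\big]^{1/q}$ and the same chain survives with the domination step producing the exponent $q/s=1$, while the extreme value $s=\infty$ is handled by the analogous degeneration. I should also flag that applying Proposition \ref{msfgsfgfgf1} to $S$ tacitly uses that $\textbf{H}_{1}$ meets the continuity requirement of Condition (\textbf{I}) on $W$, which is part of the standing hypotheses on the outer class.
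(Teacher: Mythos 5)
Your overall architecture (Condition (\textbf{II}), then the summing property of $S$, then Condition (\textbf{VI}), then the mixing property of $T$) parallels the paper's, but the way you implement the middle step creates a genuine gap. To convert the summing hypothesis on $S$ into an integral domination you invoke Proposition \ref{msfgsfgfgf1} in its nontrivial direction, and that direction is only available when the kernel playing the role of $\textbf{H}$ is continuous in the compact variable — here, when $\psi\mapsto\textbf{H}_{1}(b,c,g,\psi)$ is continuous on $W$. No such condition appears in the hypotheses of Proposition \ref{abuleldfddff} (which assumes only (\textbf{II}) and (\textbf{VI})), and in fact no condition anywhere in the paper asserts continuity of $\textbf{H}_{1}$ in $\psi$: Condition (\textbf{I}) concerns $\textbf{H}$ on $K$, not $\textbf{H}_{1}$ on $W$. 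Your closing remark that this continuity is ``part of the standing hypotheses on the outer class'' is incorrect — membership in $\mathfrak{P}_{s}^{\textbf{H}_{1}-\textbf{Q}_{1}}(B,D)$ is defined purely by the finite-sum inequality, with no regularity imposed on $\textbf{H}_{1}$ — so as written your argument proves the proposition only under an extra assumption. A smaller unstated point of the same kind: Proposition \ref{msfgsfgfgf1} produces \emph{some} constant $\delta$, and to run your computation with the constant $(\textbf{H}_{1}\textbf{Q}_{1})_{s}(S)$ you need the standard but nowhere-proved fact that the infimum of admissible domination constants equals the summing index.

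The gap is avoidable, and the paper's proof shows how: no measure on $W$ is needed at all. Fix $\epsilon>0$, choose a near-optimal weight family $(\tau_{j})$ in the definition of $\mathfrak{m}_{(s;q)}^{\textbf{M}}$, and chain Condition (\textbf{II}), H\"older's inequality with $\frac{1}{q}=\frac{1}{r}+\frac{1}{s}$, the defining finite-sum inequality of $S$ applied to the points $Ta_{j}$ with weights $\frac{\sigma_{j}}{\tau_{j}}$, and Condition (\textbf{VI}):
\begin{align*}
\Big[\sum_{j=1}^{m}|\sigma_{j}|^{q}|\textbf{Q}_{2}(S\circ T,a_{j},c_{j},g_{j})|^{q}\Big]^{\frac{1}{q}}
&\leq\Big[\sum_{j=1}^{m}|\tau_{j}|^{r}\Big]^{\frac{1}{r}}\Big[\sum_{j=1}^{m}\Big|\frac{\sigma_{j}}{\tau_{j}}\Big|^{s}|\textbf{Q}_{1}(S,Ta_{j},c_{j},g_{j})|^{s}\Big]^{\frac{1}{s}}\\
&\leq(\textbf{H}_{1}\textbf{Q}_{1})_{s}(S)\Big[\sum_{j=1}^{m}|\tau_{j}|^{r}\Big]^{\frac{1}{r}}\sup_{\psi\in W}\Big[\sum_{j=1}^{m}\Big|\frac{\sigma_{j}}{\tau_{j}}\Big|^{s}|\textbf{M}(T,a_{j},c_{j},g_{j},\psi)|^{s}\Big]^{\frac{1}{s}}\\
&\leq(1+\epsilon)\,(\textbf{H}_{1}\textbf{Q}_{1})_{s}(S)\,\textbf{H}\textbf{M}_{\left((s;q),q\right)}(T)\sup_{\varphi\in K}\Big[\sum_{j=1}^{m}|\sigma_{j}|^{q}|\textbf{H}(a_{j},c_{j},g_{j},\varphi)|^{q}\Big]^{\frac{1}{q}},
\end{align*}
then let $\epsilon\to 0$. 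This uses only (\textbf{II}) and (\textbf{VI}), and it treats $q=s$ and $s=\infty$ uniformly, so the case-splitting your route forces (Proposition \ref{two} being stated only for $0<q<s<\infty$) disappears. Incidentally, your appeal to Proposition \ref{two} is harmless in substance: you only need the inequality $\big[\sum_{j}|\sigma_{j}|^{q}(\int_{W}|\textbf{M}|^{s}\,d\mu)^{q/s}\big]^{1/q}\leq\mathfrak{m}_{(s;q)}^{\textbf{M}}$, which is exactly the elementary H\"older direction of that proposition and requires neither Condition (\textbf{III}) nor $q<s$. The Pietsch-domination step for $S$, by contrast, cannot be repaired this way and must be replaced by the direct finite-sum argument above.
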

\begin{proof}
Suppose that $S\in\mathfrak{P}_{s}^{\textbf{H}_{1}-\textbf{Q}_{1}}\left(B, D\right)$ and $T\in\mathfrak{M}_{\left((s, q), q\right)}^{\textbf{H}-\textbf{M}}\left(A, B\right)$. Given $\sigma_{1},\ldots, \sigma_{m}$ in $\mathbb{R}$, $a_{1},\ldots, a_{m}$ in $A$, $c_{1},\ldots, c_{m}$, $g_{1},\ldots, g_{m}$ in $G$, and $\epsilon>0$, we have
\begin{align}
\left[\sum\limits_{j=1}^{m}\left|\tau_j\right|^{r}\right]^{\frac{1}{r}} &\sup\limits_{\psi\in W}\left[\sum\limits_{j=1}^{m}\left|\frac{\sigma_{j}}{\tau_{j}}\right|^{s} \left|\textbf{M}(T, a_{j}, c_{j}, g_{j}, \psi)\right|^{s}\right]^{\frac{1}{s}}\leq (1+\epsilon)\cdot\mathfrak{m}_{(s;q)}^{\textbf{M}}\left((\sigma_{j}, T, a_{j}, c_{j}, g_{j}, \psi)\right)_{j=1}^{m} \nonumber \\
&\leq (1+\epsilon)\cdot\textbf{H}\textbf{M}_{\left((s; q), q\right)}(T)\cdot\sup\limits_{\varphi\in K}\left[\sum\limits_{j=1}^{m}\left|\sigma_j\right|^{q} \left|\textbf{H}(a_{j}, c_{j}, g_{j}, \varphi)\right|^{q}\right]^{\frac{1}{q}}.  \nonumber 
\end{align} 
We now notice from
$$\left[\sum\limits_{j=1}^{m}\left|\sigma_j\right|^{s} \left|\textbf{Q}_{1}(S, b_{j}, c_{j}, g_{j})\right|^{s}\right]^{\frac{1}{s}}\leq{(\textbf{H}_{1}\textbf{Q}_{1})}_{s}(S)\sup\limits_{\psi\in W}\left[\sum\limits_{j=1}^{m}\left|\sigma_j\right|^{s} \left|\textbf{H}_{1}(b_{j}, c_{j}, g_{j}, \psi)\right|^{s}\right]^{\frac{1}{s}}$$
that
$$\left[\sum\limits_{j=1}^{m}\left|\sigma_j\right|^{s} \left|\textbf{Q}_{1}(S, T a_{j}, c_{j}, g_{j})\right|^{s}\right]^{\frac{1}{s}}\leq{(\textbf{H}_{1}\textbf{Q}_{1})}_{s}(S)\sup\limits_{\psi\in W}\left[\sum\limits_{j=1}^{m}\left|\sigma_j\right|^{s} \left|\textbf{H}_{1}(T a_{j}, c_{j}, g_{j}, \psi)\right|^{s}\right]^{\frac{1}{s}}.$$ 
By applying H\"older inequality and Conditions  (\textbf{II}) and (\textbf{VI}) we obtain 
\begin{align}
\left[\sum\limits_{j=1}^{m} \left|\sigma_j\right|^{q} \left|\textbf{Q}_{2}(S\circ T, a_{j}, c_{j}, g_{j})\right|^{q}\right]^{\frac{1}{q}}&\leq\left[\sum\limits_{j=1}^{m} \left|\sigma_j\right|^{q} \left|\textbf{Q}_{1}(S, T a_{j}, c_{j}, g_{j})\right|^{q}\right]^{\frac{1}{q}}  \nonumber \\
&\leq\left[\sum\limits_{j=1}^{m}\left|\tau_j\right|^{r}\right]^{\frac{1}{r}} \left[\sum\limits_{j=1}^{m}\left|\frac{\sigma_j}{\tau_j}\right|^{s} \left|\textbf{Q}_{1}(S, T a_{j}, c_{j}, g_{j})\right|^{s}\right]^{\frac{1}{s}}  \nonumber \\
&\leq{(\textbf{H}_{1}\textbf{Q}_{1})}_{s}(S)\left[\sum\limits_{j=1}^{m}\left|\tau_j\right|^{r}\right]^{\frac{1}{r}} \sup\limits_{\psi\in W}\left[\sum\limits_{j=1}^{m}\left|\frac{\sigma_j}{\tau_j}\right|^{s} \left|\textbf{H}_{1}(T a_{j}, c_{j}, g_{j}, \psi)\right|^{s}\right]^{\frac{1}{s}} \nonumber \\
&\leq{(\textbf{H}_{1}\textbf{Q}_{1})}_{s}(S)\left[\sum\limits_{j=1}^{m}\left|\tau_j\right|^{r}\right]^{\frac{1}{r}} \sup\limits_{\psi\in W}\left[\sum\limits_{j=1}^{m}\left|\frac{\sigma_j}{\tau_j}\right|^{s} \left|\textbf{M}(T, a_{j}, c_{j}, g_{j}, \psi)\right|^{s}\right]^{\frac{1}{s}} \nonumber\\
&\leq (1+\epsilon)\cdot {(\textbf{H}_{1}\textbf{Q}_{1})}_{s}(S)\;\textbf{H}\textbf{M}_{\left((s; q), q\right)}(T)\sup\limits_{\varphi\in K}\left[\sum\limits_{j=1}^{m} \left|\sigma_j\right|^{q} \left|\textbf{H}(a_{j}, c_{j}, g_{j}, \varphi)\right|^{q}\right]^{\frac{1}{q}}.  \nonumber
\end{align}
Hence $S\circ T\in\mathfrak{P}_{q}^{\textbf{H}-\textbf{Q}_{2}}\left(A, D\right)$ with ${(\textbf{H}\textbf{Q}_{2})}_{q}(S\circ T)\leq{(\textbf{H}_{1}\textbf{Q}_{1})}_{s}(S)\cdot\textbf{H}\textbf{M}_{\left((s; q), q\right)}(T)$.
\end{proof} 
The following characterization is a quite general of unified Pietsch domination theorem given in \cite [Theorem 3.1] {BPR11}.
\begin{prop}\label{abuleldgfgfhgfhgfddff}  
Let $0<q\leq s\leq\infty$ and let the maps $\textbf{M}$, $\textbf{H}$, $\textbf{\textit{J}}_{\mu}$ and $\textbf{Q}_{2}$   satisfy Conditions (\textbf{I}), (\textbf{III}),  and (\textbf{VII}), respectively. A map $T$ is $\textbf{H}$--$\textbf{M}$-$\left((s; q), q\right)$-mixing if and only if there exists a constant $\delta\geq 0$ such that for any probability measure $\mu$ on $W$  there exists a probability measure $\nu$ on $K$ such that $$\left[\int\limits_{W}\left|\textbf{M}(T, a, c, g, \psi)\right|^{s} d\mu(\psi)\right]^\frac{1}{s}\leq\delta\cdot\left[\int\limits_{K}\left|\textbf{H}(a, c, g, \varphi)\right|^{q} d\nu(\varphi)\right]^\frac{1}{q},$$
whenever $a\in A$, $c\in C$ and $g\in G$. Moreover $\textbf{H}\textbf{M}_{\left((s; q), q\right)}(T)=\inf \delta$.
\end{prop}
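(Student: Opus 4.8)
The plan is to prove both implications by reducing the mixing inequality, via Proposition~\ref{two}, to the abstract $q$-summing domination of Proposition~\ref{msfgsfgfgf1} applied to the composition $\textbf{\textit{J}}_{\mu}\circ T$, where $\textbf{\textit{J}}_{\mu}$ is the factoring map furnished by Condition~(\textbf{VII}). I would carry out the argument in the generic range $0<q<s<\infty$, so that Proposition~\ref{two} applies verbatim; the endpoint $q=s$ is handled identically after replacing Proposition~\ref{two} by Lemma~\ref{twotwo}, and $s=\infty$ is analogous.

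For necessity, assume $T$ is \textbf{H}--\textbf{M}-$((s;q),q)$-mixing and fix an arbitrary $\mu\in\textbf{W}(W)$. Condition~(\textbf{VII}) supplies a map $\textbf{\textit{J}}_{\mu}\in\mathfrak{P}_{s}^{\textbf{H}_{1}-\textbf{Q}_{1}}(B,D)$ with $\pi_{\textbf{H}_{1}\textbf{Q}_{1},s}(\textbf{\textit{J}}_{\mu})\leq 1$ for which inequality~(\ref{zabalawe}) holds. Since $T\in\mathfrak{M}_{((s;q),q)}^{\textbf{H}-\textbf{M}}(A,B)$, the multiplication formula (Proposition~\ref{abuleldfddff}) yields $\textbf{\textit{J}}_{\mu}\circ T\in\mathfrak{P}_{q}^{\textbf{H}-\textbf{Q}_{2}}(A,D)$ with
$$(\textbf{H}\textbf{Q}_{2})_{q}(\textbf{\textit{J}}_{\mu}\circ T)\leq\pi_{\textbf{H}_{1}\textbf{Q}_{1},s}(\textbf{\textit{J}}_{\mu})\cdot\textbf{H}\textbf{M}_{((s;q),q)}(T)\leq\textbf{H}\textbf{M}_{((s;q),q)}(T).$$
Because \textbf{H} satisfies Condition~(\textbf{I}), Proposition~\ref{msfgsfgfgf1} applies to $\textbf{\textit{J}}_{\mu}\circ T$ and produces a Borel probability measure $\nu$ on $K$ (depending on $\mu$) whose domination constant may be taken arbitrarily close to $(\textbf{H}\textbf{Q}_{2})_{q}(\textbf{\textit{J}}_{\mu}\circ T)$, so that
$$|\textbf{Q}_{2}(\textbf{\textit{J}}_{\mu}\circ T,a,c,g)|\leq\textbf{H}\textbf{M}_{((s;q),q)}(T)\cdot\Big[\int_{K}|\textbf{H}(a,c,g,\varphi)|^{q}\,d\nu(\varphi)\Big]^{1/q}$$
for all $a\in A$, $c\in C$, $g\in G$. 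Chaining this with the lower bound~(\ref{zabalawe}) gives the asserted domination with $\delta=\textbf{H}\textbf{M}_{((s;q),q)}(T)$, whence $\inf\delta\leq\textbf{H}\textbf{M}_{((s;q),q)}(T)$.

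For sufficiency, suppose the domination holds with constant $\delta$ and, for each $\mu\in\textbf{W}(W)$, let $\nu=\nu_{\mu}$ be the associated measure on $K$. Fix finitely many nonzero $\sigma_{j}\in\mathbb{R}$, $a_{j}\in A$, $c_{j}\in C$, $g_{j}\in G$ and a measure $\mu$. Raising the domination to the $q$-th power, weighting by $|\sigma_{j}|^{q}$, summing over $j$, and interchanging the finite sum with the integral gives
$$\sum_{j=1}^{m}|\sigma_{j}|^{q}\Big(\int_{W}|\textbf{M}(T,a_{j},c_{j},g_{j},\psi)|^{s}\,d\mu(\psi)\Big)^{q/s}\leq\delta^{q}\int_{K}\sum_{j=1}^{m}|\sigma_{j}|^{q}|\textbf{H}(a_{j},c_{j},g_{j},\varphi)|^{q}\,d\nu_{\mu}(\varphi).$$
As $\nu_{\mu}$ is a probability measure, the right-hand integral is at most $\delta^{q}\sup_{\varphi\in K}\sum_{j}|\sigma_{j}|^{q}|\textbf{H}(a_{j},c_{j},g_{j},\varphi)|^{q}$. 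Taking $q$-th roots and then the supremum over all $\mu\in\textbf{W}(W)$, Proposition~\ref{two} identifies the left-hand supremum with $\mathfrak{m}_{(s;q)}^{\textbf{M}}\big((\sigma_{j},T,a_{j},c_{j},g_{j},\psi)\big)_{j=1}^{m}$, which is precisely the inequality of Definition~\ref{hmdy}. Hence $T$ is mixing with $\textbf{H}\textbf{M}_{((s;q),q)}(T)\leq\delta$, so $\textbf{H}\textbf{M}_{((s;q),q)}(T)\leq\inf\delta$; together with the reverse bound from the necessity step this yields $\textbf{H}\textbf{M}_{((s;q),q)}(T)=\inf\delta$.

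The main obstacle is the necessity direction: one must produce, for every $\mu$, a measure $\nu$ whose domination constant is controlled \emph{uniformly} by $\textbf{H}\textbf{M}_{((s;q),q)}(T)$, respecting the quantifier order ``for every $\mu$ there is $\nu$''. This is exactly where the factorization through $\textbf{\textit{J}}_{\mu}$ is essential: the measure $\nu$ emerges as the Pietsch measure of $\textbf{\textit{J}}_{\mu}\circ T$, so its dependence on $\mu$ is automatic, and its constant stays within $(\textbf{H}\textbf{Q}_{2})_{q}(\textbf{\textit{J}}_{\mu}\circ T)\leq\textbf{H}\textbf{M}_{((s;q),q)}(T)$ by the bound $\pi_{\textbf{H}_{1}\textbf{Q}_{1},s}(\textbf{\textit{J}}_{\mu})\leq 1$ of Condition~(\textbf{VII}). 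I note that the invocation of the multiplication formula silently presupposes Conditions~(\textbf{II}) and~(\textbf{VI}), which I would add explicitly to the hypotheses. The endpoint cases $q=s$ (substitute Lemma~\ref{twotwo} for Proposition~\ref{two}) and $s=\infty$ require only cosmetic adjustments.
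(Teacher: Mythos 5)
Your proposal is correct and follows essentially the same route as the paper's own proof: sufficiency by raising the domination to the $q$-th power, summing with weights $\left|\sigma_j\right|^{q}$, and taking the supremum over $\mu$ via Proposition \ref{two}, and necessity by using Condition (\textbf{VII}) to produce $\textbf{\textit{J}}_{\mu}$, the multiplication formula (Proposition \ref{abuleldfddff}) to conclude that $\textbf{\textit{J}}_{\mu}\circ T$ is \textbf{H}--$\textbf{Q}_{2}$-abstract $q$-summing, and then Proposition \ref{msfgsfgfgf1} to obtain the Pietsch measure $\nu$, chained with Inequality (\ref{zabalawe}). Your side remark that Conditions (\textbf{II}) and (\textbf{VI}) are tacitly required (since Proposition \ref{abuleldfddff} depends on them) is accurate and applies equally to the paper's own proof, which invokes that proposition under the same unstated hypotheses.
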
 
\begin{proof}
Let $\sigma$ be an arbitrary nonzero sequence in $\mathbb{R}$. By the assumptions, we have
\begin{align}\label{five}
\left[\sum\limits_{j=1}^{m}\left|\sigma_j\right|^{q}\left[\int\limits_{W}\left|\textbf{M}(T, a_j, c_j, g_j, \psi)\right|^{s} d\mu(\psi)\right]^{\frac{q}{s}}\right]^\frac{1}{q}&\leq\delta\cdot\left[\sum\limits_{j=1}^{m}\left|\sigma_j\right|^{q}\int\limits_{K}\left|\textbf{H}(a_j, c_j, g_j, \varphi)\right|^{q} d\nu(\varphi)\right]^\frac{1}{q}\nonumber \\
&\leq \delta\cdot\sup\limits_{\varphi\in K}\left[\sum\limits_{j=1}^{m} \left|\sigma_j\right|^{q} \left|\textbf{H}(a_{j}, c_{j}, g_{j}, \varphi)\right|^{q}\right]^{\frac{1}{q}}.
\end{align}
Taking the supremum over $\mu$ on $W$ on the left side of (\ref{five}) and from Proposition \ref{two}, we get
$$\mathfrak{m}_{(s;q)}^{\textbf{M}}\left((\sigma_{j}, T, a_{j}, c_j, g_{j}, \psi)\right)_{j=1}^{m}\leq \delta\cdot\sup\limits_{\varphi\in K}\left[\sum\limits_{j=1}^{m} \left|\sigma_j\right|^{q} \left|\textbf{H}(a_{j}, c_{j}, g_{j}, \varphi)\right|^{q}\right]^{\frac{1}{q}}.$$ Conversely, suppose that $T$ is $\textbf{H}$--$\textbf{M}$-$\left((s; q), q\right)$-mixing map. From Proposition  \ref{abuleldfddff} and using Condition (\textbf{VII})  we obtain $\textbf{\textit{J}}_{\mu}\circ T$ be  $\textbf{H}$--$\textbf{Q}_{2}$-abstract $q$-summing map with $\pi_{\textbf{H}\textbf{Q}_{2}, q}(\textbf{\textit{J}}_{\mu}\circ T)\leq \textbf{H}\textbf{M}_{\left((s; q), q\right)}(T)$. Hence, by using Proposition \ref{msfgsfgfgf1}, there exists a probability measure $\nu$ on $K$ such that $$\left[\int\limits_{W}\left|\textbf{M}(T, a, c, g, \psi)\right|^{s} d\mu(\psi)\right]^\frac{1}{s}\leq\left|\textbf{Q}_{2}(\textbf{\textit{J}}_{\mu}\circ T, a, c, g)\right|\leq\textbf{H}\textbf{M}_{\left((s, q), q\right)}(T)\cdot\left[\int\limits_{K}\left|\textbf{H}(a, c, g, \varphi )\right|^{q} d\nu(\varphi)\right]^\frac{1}{q}$$ 
for all $a\in A$, $c\in C$ and $g\in G$.
\end{proof} 
The  next inclusion result follows immediately from  Proposition \ref{abuleldgfgfhgfhgfddff}.
\begin{prop}
If $q_{1}\leq q_{2}\leq s_{2}\leq s_{1}$, then $$\mathfrak{M}_{\left((s_{1}; q_{1}), q_{1}\right)}^{\textbf{H}-\textbf{M}}\left(A, B\right)\subseteq\mathfrak{M}_{\left((s_{2}; q_{2}), q_{2}\right)}^{\textbf{H}-\textbf{M}}\left(A, B\right).$$
\end{prop}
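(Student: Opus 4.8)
The plan is to deduce the inclusion directly from the unified Pietsch domination characterization of Proposition \ref{abuleldgfgfhgfhgfddff}, which is legitimate since both index pairs satisfy its hypothesis $0<q\le s<\infty$: from $q_{1}\le q_{2}\le s_{2}\le s_{1}$ we read off $q_{1}\le s_{1}$ and $q_{2}\le s_{2}$, and we rely on Conditions (\textbf{I}), (\textbf{III}) and (\textbf{VII}) holding for both pairs so that the characterization may be used in both directions. First I would fix a map $T\in\mathfrak{M}_{\left((s_{1}; q_{1}), q_{1}\right)}^{\textbf{H}-\textbf{M}}\left(A, B\right)$ and invoke Proposition \ref{abuleldgfgfhgfhgfddff} with $(s,q)=(s_{1},q_{1})$ to obtain a constant $\delta\ge 0$ (which may be taken arbitrarily close to $\textbf{H}\textbf{M}_{\left((s_{1}; q_{1}), q_{1}\right)}(T)$) such that for every $\mu\in\textbf{W}(W)$ there is $\nu\in\textbf{W}(K)$ with
$$\left[\int_{W}\left|\textbf{M}(T, a, c, g, \psi)\right|^{s_{1}}\,d\mu(\psi)\right]^{\frac{1}{s_{1}}}\le\delta\cdot\left[\int_{K}\left|\textbf{H}(a, c, g, \varphi)\right|^{q_{1}}\,d\nu(\varphi)\right]^{\frac{1}{q_{1}}}$$
for all $a\in A$, $c\in C$ and $g\in G$.

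The heart of the argument is the monotonicity of $L^{p}$-norms on a probability space. Since $\mu$ and $\nu$ are probability measures, Jensen's inequality applied to the convex maps $t\mapsto t^{s_{1}/s_{2}}$ and $t\mapsto t^{q_{2}/q_{1}}$ yields, for each fixed triple $(a,c,g)$, the two estimates
$$\left[\int_{W}\left|\textbf{M}(T, a, c, g, \psi)\right|^{s_{2}}\,d\mu(\psi)\right]^{\frac{1}{s_{2}}}\le\left[\int_{W}\left|\textbf{M}(T, a, c, g, \psi)\right|^{s_{1}}\,d\mu(\psi)\right]^{\frac{1}{s_{1}}}$$
(because $s_{2}\le s_{1}$) and
$$\left[\int_{K}\left|\textbf{H}(a, c, g, \varphi)\right|^{q_{1}}\,d\nu(\varphi)\right]^{\frac{1}{q_{1}}}\le\left[\int_{K}\left|\textbf{H}(a, c, g, \varphi)\right|^{q_{2}}\,d\nu(\varphi)\right]^{\frac{1}{q_{2}}}$$
(because $q_{1}\le q_{2}$). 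Chaining these with the domination inequality above gives, with the \emph{same} $\mu$, the \emph{same} $\nu$ and the \emph{same} constant $\delta$,
$$\left[\int_{W}\left|\textbf{M}(T, a, c, g, \psi)\right|^{s_{2}}\,d\mu(\psi)\right]^{\frac{1}{s_{2}}}\le\delta\cdot\left[\int_{K}\left|\textbf{H}(a, c, g, \varphi)\right|^{q_{2}}\,d\nu(\varphi)\right]^{\frac{1}{q_{2}}}.$$

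Finally I would read this last inequality through the reverse direction of Proposition \ref{abuleldgfgfhgfhgfddff}, now applied with $(s,q)=(s_{2},q_{2})$: the existence, for each $\mu\in\textbf{W}(W)$, of a $\nu\in\textbf{W}(K)$ making the $(s_{2},q_{2})$-domination hold is precisely the defining characterization of membership in $\mathfrak{M}_{\left((s_{2}; q_{2}), q_{2}\right)}^{\textbf{H}-\textbf{M}}\left(A, B\right)$, so $T$ lies in that class, and keeping track of the infimum over admissible $\delta$ gives the bonus estimate $\textbf{H}\textbf{M}_{\left((s_{2}; q_{2}), q_{2}\right)}(T)\le\textbf{H}\textbf{M}_{\left((s_{1}; q_{1}), q_{1}\right)}(T)$. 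I do not anticipate a genuine obstacle here; the only point requiring care is the bookkeeping of the four exponents, namely confirming that the two norm-monotonicity inequalities point in the directions needed to \emph{preserve a single upper bound} — the $\textbf{M}$-side norm decreases as the exponent drops from $s_{1}$ to $s_{2}$, while the $\textbf{H}$-side norm increases as the exponent rises from $q_{1}$ to $q_{2}$, so both inequalities cooperate with the domination estimate rather than against it.
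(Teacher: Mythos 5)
Your proposal is correct and follows exactly the route the paper intends: the paper gives no written proof beyond the remark that the inclusion ``follows immediately from Proposition \ref{abuleldgfgfhgfhgfddff}'', and your argument---apply that domination characterization for $(s_{1},q_{1})$, then use monotonicity of $L^{p}$-norms over probability measures ($s_{2}\leq s_{1}$ on the $\textbf{M}$-side, $q_{1}\leq q_{2}$ on the $\textbf{H}$-side), then read the result back through the characterization for $(s_{2},q_{2})$---is precisely the omitted ``immediate'' deduction, carried out with the correct inequality directions and the bonus norm estimate.
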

\begin{prop}
Let the maps $\textbf{M}_{1}$, $\textbf{M}_{2}$, $\textbf{H}_{1}$ and $\textbf{M}$  satisfy Conditions (\textbf{V}) and (\textbf{VI}), respectively. If  $0 < p\leq s\leq t\leq\infty$, then $$\mathfrak{M}_{\left((t; s), s\right)}^{\textbf{H}_{1}-\textbf{M}_{1}}\left(B, D\right)\circ\mathfrak{M}_{\left((s; q), q\right)}^{\textbf{H}-\textbf{M}}\left(A, B\right)\subseteq\mathfrak{M}_{\left((t; q), q\right)}^{\textbf{H}-\textbf{M}_{2}}\left(A, D\right).$$
\end{prop}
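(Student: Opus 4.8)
The plan is to prove the inclusion by estimating directly the mixing quantity $\mathfrak{m}_{(t;q)}^{\textbf{M}_2}$ of the composition, following the template of the multiplication formula in Proposition \ref{abuleldfddff}. Fix $S\in\mathfrak{M}_{\left((t;s),s\right)}^{\textbf{H}_1-\textbf{M}_1}(B,D)$ and $T\in\mathfrak{M}_{\left((s;q),q\right)}^{\textbf{H}-\textbf{M}}(A,B)$, and take arbitrary nonzero $\sigma_1,\ldots,\sigma_m\in\mathbb{R}$, $a_1,\ldots,a_m\in A$, $c_1,\ldots,c_m\in C$ and $g_1,\ldots,g_m\in G$. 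Let the three conjugate indices $r_1,r_2,r_3$ be determined by $\frac{1}{r_1}+\frac{1}{t}=\frac{1}{s}$, $\frac{1}{r_2}+\frac{1}{s}=\frac{1}{q}$ and $\frac{1}{r_3}+\frac{1}{t}=\frac{1}{q}$. The arithmetic backbone of the argument is the resulting identity $\frac{1}{r_3}=\frac{1}{r_1}+\frac{1}{r_2}$, which is exactly what is needed to glue two $\ell_r$-weight families into one.

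First I would, for a given $\epsilon>0$, use Definition \ref{sadoon} to pick a nonzero family $(\tau_j)_{j=1}^m\in\ell_{r_2}$ that is $(1+\epsilon)$-optimal for $\mathfrak{m}_{(s;q)}^{\textbf{M}}\left((\sigma_j,T,a_j,c_j,g_j,\psi)\right)_{j=1}^m$. Setting $\eta_j:=\sigma_j/\tau_j$ (all nonzero), I would then feed the family $(\eta_j,S,Ta_j,c_j,g_j,\psi)_{j=1}^m$ into the mixing quantity of $S$ and pick a nonzero $(\theta_j)_{j=1}^m\in\ell_{r_1}$ that is $(1+\epsilon)$-optimal for $\mathfrak{m}_{(t;s)}^{\textbf{M}_1}\left((\eta_j,S,Ta_j,c_j,g_j,\psi)\right)_{j=1}^m$. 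The candidate weight for the composition is the pointwise product $(\tau_j\theta_j)_{j=1}^m$, which lies in $\ell_{r_3}$ by the index identity together with H\"older's inequality, giving $[\sum_j|\tau_j\theta_j|^{r_3}]^{1/r_3}\le[\sum_j|\tau_j|^{r_2}]^{1/r_2}[\sum_j|\theta_j|^{r_1}]^{1/r_1}$.

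Next I would estimate the inner supremum for $S\circ T$ against this weight. Condition (\textbf{V}) replaces $|\textbf{M}_2(S\circ T,a_j,c_j,g_j,\psi)|$ by $|\textbf{M}_1(S,Ta_j,c_j,g_j,\psi)|$, and since $\eta_j/\theta_j=\sigma_j/(\tau_j\theta_j)$ the resulting supremum is precisely the one occurring in the definition of $\mathfrak{m}_{(t;s)}^{\textbf{M}_1}$ against the weight $(\theta_j)$. Combining this with the factor $[\sum_j|\theta_j|^{r_1}]^{1/r_1}$ and the near-optimality of $(\theta_j)$ bounds the $S$-layer by $(1+\epsilon)\,\textbf{H}_1\textbf{M}_{\left((t;s),s\right)}(S)\cdot\sup_{\varphi}[\sum_j|\eta_j|^s|\textbf{H}_1(Ta_j,c_j,g_j,\varphi)|^s]^{1/s}$. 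Here Condition (\textbf{VI}), $\textbf{H}_1(Ta,c,g,\psi)\le\textbf{M}(T,a,c,g,\psi)$, is the hinge: it converts the $\textbf{H}_1$-supremum into the $\textbf{M}$-supremum, which, after reinstating the factor $[\sum_j|\tau_j|^{r_2}]^{1/r_2}$, is exactly the quantity controlled in the first step by $\mathfrak{m}_{(s;q)}^{\textbf{M}}$ and hence by $\textbf{H}\textbf{M}_{\left((s;q),q\right)}(T)\sup_\varphi[\sum_j|\sigma_j|^q|\textbf{H}(a_j,c_j,g_j,\varphi)|^q]^{1/q}$.

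Assembling the chain yields $\mathfrak{m}_{(t;q)}^{\textbf{M}_2}\left((\sigma_j,S\circ T,a_j,c_j,g_j,\psi)\right)_{j=1}^m\le(1+\epsilon)^2\,\textbf{H}_1\textbf{M}_{\left((t;s),s\right)}(S)\,\textbf{H}\textbf{M}_{\left((s;q),q\right)}(T)\,\sup_\varphi[\sum_j|\sigma_j|^q|\textbf{H}(a_j,c_j,g_j,\varphi)|^q]^{1/q}$; letting $\epsilon\to0$ gives membership in $\mathfrak{M}_{\left((t;q),q\right)}^{\textbf{H}-\textbf{M}_2}(A,D)$ with norm at most the product of the two factor norms. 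The step I expect to be the main obstacle is the bookkeeping that makes the two weight families interlock correctly: one must verify that the coefficient $\eta_j=\sigma_j/\tau_j$ fed to $S$ is the unique choice for which the product weight $\tau_j\theta_j$ reproduces the composition's inner sum, and one must take care at the boundary indices ($s=t$ or $q=s$, where some $r$ equals $\infty$), invoking the $q=s$ computation of Lemma \ref{twotwo} and the $\ell_\infty$ conventions in place of the generic H\"older step.
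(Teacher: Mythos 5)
Your proposal is correct and takes essentially the same route as the paper's own proof: both rest on factoring the weight family for the composition as a product $\tau_j\theta_j$, the index identity $\frac{1}{r_3}=\frac{1}{r_1}+\frac{1}{r_2}$ with H\"older's inequality, and Conditions (\textbf{V}) and (\textbf{VI}) to pass from $\textbf{M}_{2}(S\circ T,\cdot)$ to $S$'s mixing quantity and then to $T$'s. The only difference is bookkeeping --- you build explicit $(1+\epsilon)$-optimal weights and let $\epsilon\to 0$, whereas the paper splits a nested infimum over product weights directly --- and your version is, if anything, the cleaner rendering of the same argument.
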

\begin{proof} 
From Definition \ref{sadoon}, we have
\begin{align}
\mathfrak{m}_{(t;q)}^{\textbf{M}_{2}}\left((\sigma_{j}, S\circ T, a_{j}, c_{j}, g_{j}, \psi)\right)_{j=1}^{m}&=\inf\limits_{\tau}\left[\sum\limits_{j=1}^{m}\left|\tau_j\right|^{r}\right]^{\frac{1}{r}}\sup\limits_{\psi\in W}\left[\sum\limits_{j=1}^{m}\left|\frac{\sigma_{j}}{\tau_{j}}\right|^{t} \left|\textbf{M}_{2}(S\circ T, a_{j}, c_{j}, g_{j}, \psi)\right|^{t}\right]^{\frac{1}{t}} \nonumber \\
&=\inf\limits_{\tau_{1}\cdot\tau_{2}}\left[\sum\limits_{j=1}^{m}\left|\tau_j^{1}\cdot\tau_j^{2}\right|^{r}\right]^{\frac{1}{r}}\sup\limits_{\psi\in W}\left[\sum\limits_{j=1}^{m}\left|\frac{\sigma_{j}}{\tau_j^{1}\cdot\tau_j^{2}}\right|^{t} \left|\textbf{M}_{2}(S\circ T, a_{j}, c_{j}, g_{j}, \psi)\right|^{t}\right]^{\frac{1}{t}}. \nonumber 
\end{align}
Put $\sigma'=\frac{\sigma}{\tau_{1}}$. Since $\frac{1}{r_{1}}+\frac{1}{r_{2}}=\frac{1}{r}$ with  H\"older inequality give us
\begin{align}
&\mathfrak{m}_{(t;q)}^{\textbf{M}_{2}}\left((\sigma_{j}, S\circ T, a_{j}, c_{j}, g_{j}, \psi)\right)_{j=1}^{m}  \nonumber \\
&\leq\inf\limits_{\tau_{1}\cdot\tau_{2}}\left[\sum\limits_{j=1}^{m}\left|\tau_j^{1}\right|^{r_{1}}\right]^{\frac{1}{r_{1}}}\cdot\left[\sum\limits_{j=1}^{m}\left|\tau_j^{1}\right|^{r_{2}}\right]^{\frac{1}{r_{2}}}\cdot\sup\limits_{\psi\in W}\left[\sum\limits_{j=1}^{m}\left|\frac{\sigma_{j}}{\tau_j^{1}\cdot\tau_j^{2}}\right|^{t} \left|\textbf{M}_{1}(S, T a_{j}, c_{j}, g_{j}, \psi)\right|^{t}\right]^{\frac{1}{t}}\nonumber \\
&=\inf\limits_{\tau_{1}}\left[\sum\limits_{j=1}^{m}\left|\tau_j^{1}\right|^{r_{1}}\right]^{\frac{1}{r_{1}}}\cdot\inf\limits_{\tau_{2}}\left[\sum\limits_{j=1}^{m}\left|\tau_j^{1}\right|^{r_{2}}\right]^{\frac{1}{r_{2}}}\cdot \sup\limits_{\psi\in W}\left[\sum\limits_{j=1}^{m}\left|\frac{\sigma_{j}}{\tau_j^{1}\cdot\tau_j^{2}}\right|^{t} \left|\textbf{M}_{1}(S, T a_{j}, c_{j}, g_{j}, \psi)\right|^{t}\right]^{\frac{1}{t}} \nonumber \\
&=\inf\limits_{\tau_{1}}\left[\sum\limits_{j=1}^{m}\left|\tau_j^{1}\right|^{r_{1}}\right]^{\frac{1}{r_{1}}}\cdot\mathfrak{m}_{(t;s)}^{\textbf{M}_{1}}\left((\sigma'_j, S, T a_{j}, c_{j}, g_{j}, \psi)\right)_{j=1}^{m} \nonumber \\
&\leq\textbf{H}\textbf{M}_{\left((t; s), s\right)}(S)\inf\limits_{\tau_{1}}\left[\sum\limits_{j=1}^{m}\left|\tau_j^{1}\right|^{r_{1}}\right]^{\frac{1}{r_{1}}}\sup\limits_{\psi\in W}\left[\sum\limits_{j=1}^{m} \left|\sigma'_j\right|^{s} \left|\textbf{H}_{1}(T a_{j}, c_{j}, g_{j}, \psi)\right|^{s}\right]^{\frac{1}{s}}\nonumber \\
&\leq\textbf{H}\textbf{M}_{\left((t; s), s\right)}(S)\inf\limits_{\tau_{1}}\left[\sum\limits_{j=1}^{m}\left|\tau_j^{1}\right|^{r_{1}}\right]^{\frac{1}{r_{1}}}\sup\limits_{\psi\in W}\left[\sum\limits_{j=1}^{m}\left|\sigma'_j\right|^{s} \left|\textbf{M}(T, a_{j}, c_{j}, g_{j}, \psi)\right|^{s}\right]^{\frac{1}{s}} \nonumber \\
&=\textbf{H}\textbf{M}_{\left((t; s), s\right)}(S)\cdot\mathfrak{m}_{(s;q)}^{\textbf{M}}\left((\sigma_j, T, a_{j}, c_{j}, g_{j}, \psi)\right)_{j=1}^{m} \nonumber \\
&\leq\textbf{H}\textbf{M}_{\left((t; s), s\right)}(S)\cdot\textbf{H}\textbf{M}_{\left((s; q), q\right)}(T)\sup\limits_{\varphi\in K}\left[\sum\limits_{j=1}^{m} \left|\sigma_j\right|^{q} \left|\textbf{H}(a_{j}, c_{j}, g_{j}, \varphi)\right|^{q}\right]^{\frac{1}{q}}.\nonumber
\end{align}
\end{proof} 
%%%%%%%%%%%%%%%%%%%%%%%%%%%%%%%%%%%%%%%%%%%%%%%%%%%%%%%%%%%%%%%%%%%%%%%%%%%%%%%%%%%%%%%%%%%%%%%%%%%%%%%%%%%%%%%%%%%%%%%%%%%%%%%%%%%%%%%%%%%%%%%%%%%%%
\section{Recovering the known  fundamental characterizations of mixing maps} \label{IIIIII} 
\subsection{The characterizations of $(s; q)$-mixing operators}  
\begin{enumerate}
\item \cite [Theorem 20.1.4] {P78} says that a bounded operator $T$ from $E$ into $F$ is $(s; q)$-mixing  if and only if there is a constant $\delta\geq 0$ such that 
$$\Bigg[\sum\limits_{j=1}^{m}\bigg[\sum\limits_{k=1}^{n}\left|\left\langle b_{k}^{*},T x_j\right\rangle\right|^{s}\bigg]^\frac{q}{s}\Bigg]^{\frac{1}{q}}  
\leq \delta\cdot \sup\limits_{x^{*}\in B_{{E}^{*}}}\left[\sum\limits_{j=1}^{m}\left|x^{*}(x_j)\right|^{q}\right]^{\frac{1}{q}}\cdot\left[\sum\limits_{k=1}^{n}\left\|b_k^{*}\right\|^{s}\right]^{\frac{1}{s}}.$$
for every $x_{1}, \cdots,x_{m}\in E$, functional $b_{1}^{*}, \cdots, b_{n}^{*}\in F^{*}$ and $m,n\in\mathbb{N}$. We put $A:=E$, $B:=F$, $C:=\left\{1\right\}$, $G:=\mathbb{R}$, $V=F^{*}$, $W:=B_{F^{*}}$, $K:=B_{E^{*}}$, $\mathcal{H}$ be a family of bounded linear operators from $E$ into $F$, and the family of semi-norms $\mathcal{P}$ can be taken to be the single norm $\textbf{P}_{b^{*}}$ defined on $F^{*}$ by $\textbf{P}_{b^{*}}=\sup\limits_{x\in B_{E}}\left|\left\langle x, b^{*}\right\rangle\right|$. We define the maps as follows.
\begin{align}
&\textbf{M}:\mathcal{H}\times E\times \left\{1\right\}\times \mathbb{R}\times B_{F^{*}}\longrightarrow \mathbb{R},\  \textbf{M}(T, x, 1, \sigma, b^{*})=\frac{\left\langle T x, b^{*}\right\rangle}{\sigma}, \nonumber \\
&\textbf{H}: E\times \left\{1\right\}\times \mathbb{R}\times B_{E^{*}}\longrightarrow \mathbb{R},\  \textbf{H}(x, 1, \sigma, x^{*})=\frac{\left\langle  x, x^{*}\right\rangle}{\sigma}, \nonumber 
\end{align}
where $\sigma\neq 0$. With these choices and applying Proposition \ref{aaa} we obtain $T$ be $(s; q)$-mixing operator if and only if $T$ be \textbf{H}--\textbf{M}-$\left((s; q), q\right)$-mixing operator. In this context Proposition \ref{aaa} coincides with Theorem 20.1.4 in \cite{P78} for  $(s; q)$-mixing operator. 
\item \cite [Theorem 20.1.7] {P78} says that a bounded operator $T$ from $E$ into $F$ is $(s; q)$-mixing   if and only if there is a constant $\delta\geq 0$ such that for any probability measure $\mu$ on $B_{F^{*}}$ there exists a probability measure $\nu$ on $B_{E^{*}}$ such that 
$$\left[\int\limits_{B_{F^{*}}}\left|\left\langle T x, b^{*}\right\rangle\right|^{s} d\mu(b^{*})\right]^\frac{1}{s}\leq\delta\cdot\left[\int\limits_{B_{E^{*}}}\left|\left\langle x, x^{*}\right\rangle\right|^{q} d\nu(x^{*})\right]^\frac{1}{q},$$
whenever $x\in E$.  We define the maps as follows.
\begin{align}
&\textbf{M}:\mathcal{H}\times E\times \left\{1\right\}\times \mathbb{R}\times B_{F^{*}}\longrightarrow \mathbb{R},\  \textbf{M}(T, x, 1, \sigma, b^{*})=\left\langle T x, b^{*}\right\rangle, \nonumber \\
&\textbf{H}: E\times \left\{1\right\}\times \mathbb{R}\times B_{E^{*}}\longrightarrow \mathbb{R},\  \textbf{H}(x, 1, \sigma, x^{*})=\left\langle  x, x^{*}\right\rangle. \nonumber 
\end{align}
With the above choices and applying Proposition \ref{abuleldgfgfhgfhgfddff} we have $T$ be $(s; q)$-mixing operator if and only if $T$ be \textbf{H}--\textbf{M}-$\left((s; q), q\right)$-mixing operator. In this context Proposition \ref{abuleldgfgfhgfhgfddff} coincides with Theorem 20.1.7 in \cite{P78} for  $(s; q)$-mixing operator.  
\end{enumerate}
\subsection{The characterizations of Lipschitz $(s; q)$-mixing maps}  
\begin{enumerate}
\item  \cite [Theorem 4.4] {S16} says that a Lipschitz map $T$ from $X$ into $Y$ is Lipschitz $(s; q)$-mixing   if and only if there is a constant $\delta\geq 0$ such that 
\begin{align}
&\Bigg[\sum\limits_{j=1}^{m}\left|\sigma_j\right|^{q}\bigg[\sum\limits_{k=1}^{n}\left|\left\langle g_{k}, Tx'_j\right\rangle_{(Y^{\#},Y)}- \left\langle   g_{k}, Tx''_j\right\rangle_{(Y^{\#},Y)}\right|^{s}\bigg]^\frac{q}{s}\Bigg]^{\frac{1}{q}} \nonumber \\
&\leq C\cdot\sup\limits_{f\in B_{{X}^{\#}}}\Bigg[\sum\limits_{j=1}^{m}\left|\sigma_j\right|^{q}\left|fx'_j-fx''_j\right|^{q}\Bigg]^{\frac{1}{q}}
\Bigg[\sum\limits_{k=1}^{n}\Lip(g_k)^{s}\Bigg]^{\frac{1}{s}} \nonumber
\end{align}
for every nonzero $\sigma_1,\cdot\cdot\cdot,\sigma_m\in\mathbb{R}$, $x'_1,\cdots, x'_m, x''_1,\cdots,x''_m\in X$, $g_1,\cdots,g_n\in Y^{\#}$ and $m,n\in\mathbb{N}$. We put $A:=X$, $B:=Y$, $C:=X$, $G:=\mathbb{R}$, $V:=Y^{\#}$, $W:=B_{Y^{\#}}$, $K:=B_{X^{\#}}$, $\mathcal{H}$ be a family of Lipschitz maps from $X$ into $Y$, and the family of semi-norms $\mathcal{P}$ can be taken to be the single norm $\textbf{P}_{\tilde{g}}$ defined on $Y^{\#}$ by $\textbf{P}_{\tilde{g}}=\sup\limits_{x'\neq x''}\frac{\left|\tilde{g} x'- \tilde{g} x''\right|}{d_{X}\left(x',x''\right)}$. We define the maps as follows.
\begin{align}
&\textbf{M}:\mathcal{H}\times  X\times X\times\mathbb{R}\times B_{Y^{\#}}\longrightarrow \mathbb{R},\  \textbf{M}(T, x', x'',   \sigma, \tilde{g})=\left\langle \tilde{g}, Tx'\right\rangle_{(Y^{\#},Y)}- \left\langle   \tilde{g}, Tx''\right\rangle_{(Y^{\#},Y)}, \nonumber \\
&\textbf{H}: X\times X\times \mathbb{R}\times B_{X^{\#}}\longrightarrow \mathbb{R},\  \textbf{H}(x', x'',  \sigma, f)=\left\langle f, x'\right\rangle_{(X^{\#},X)}- \left\langle   f, x''\right\rangle_{(X^{\#},X)}. \nonumber 
\end{align}
With these choices and applying Proposition \ref{aaa} we get $T$ is Lipschitz $(s; q)$-mixing map if and only if $T$ is \textbf{H}--\textbf{M}-$\left((s; q), q\right)$-mixing map. In this context Proposition \ref{aaa} coincides with Theorem 4.4 in \cite{S16} for Lipschitz $(s; q)$-mixing map. 
\item \cite [Theorem 4.1] {C12} says that a Lipschitz map $T$ from $X$ into $Y$ is Lipschitz $(s; q)$-mixing   if and only if there is a constant $\delta\geq 0$ such that for any probability measure $\mu$ on $B_{Y^{\#}}$ there exists a probability measure $\nu$ on $B_{X^{\#}}$ such that $$\left[\int\limits_{B_{Y^{\#}}}\left|\left\langle g, Tx'\right\rangle_{(Y^{\#},Y)}- \left\langle   g, Tx''\right\rangle_{(Y^{\#},Y)}\right|^{s} d\mu(g)\right]^\frac{1}{s}\leq\delta\cdot\left[\int\limits_{B_{X^{\#}}}\left|\left\langle f, x'\right\rangle_{(X^{\#},X)}- \left\langle   f, x''\right\rangle_{(X^{\#},X)}\right|^{q} d\nu(f)\right]^\frac{1}{q},$$
whenever $x'_1,\cdots, x'_m, x''_1,\cdots,x''_m\in X$, and $m\in\mathbb{N}$. With the above  choices and applying Proposition \ref{abuleldgfgfhgfhgfddff} we have $T$ is Lipschitz $(s; q)$-mixing map if and only if $T$ is \textbf{H}--\textbf{M}-$\left((s; q), q\right)$-mixing map. In this context  Proposition \ref{abuleldgfgfhgfhgfddff} coincides with Theorem 4.1 in \cite{C12} for Lipschitz $(s; q)$-mixing map. 
\end{enumerate}
%%%%%%%%%%%%%%%%%%%%%%%%%%%%%%%%%%%%%%%%%%%%%%%%%%%%%%%%%%%%%%%%%%%%%%%%%%%%%%%%%%%%%%%%%%%%%%%%%%%%%%%%%%%%%%%%%%%%%%%%%%%%%%%%%%%%%%%%%%%%%%%%%%%%%%
\section{ R$_1$, ..., R$_t$-S-((s, q), p$_1$, ..., p$_t$)-mixing maps}  \label{IIIIIII}
Let $A_1,\cdots, A_t$, $B$ and $C_1,\cdots, C_r$ be non-void sets, $\mathcal{H}$ be a non-void family of mappings from $A_1\times\cdots\times A_t$ into $B$, and $G_1,\cdots, G_s$ be  Banach spaces. Let  $W$ and $K_1,\cdots, K_s$ be compact Hausdorff topological spaces. Put $\widetilde{A}:= A_1\times\cdots\times A_t$, $\widetilde{C}:= C_1\times\cdots\times C_r$ and $\widetilde{G}:= G_1\times\cdots\times G_s$. 

Let $\textbf{M}:\mathcal{H}\times \widetilde{A}\times\widetilde{C}\times  \widetilde{G}\times W\longrightarrow \mathbb{R}$ and $\textbf{H}_{k}: \widetilde{A}\times\widetilde{C}\times G_{k}\times K_{k}\longrightarrow \mathbb{R}, \; k=1,\ldots, s$ be arbitrary maps satisfy the following conditions:
  
\textbf{(VIII)}  The mapping $\textbf{M}_{T, a_1,\ldots, a_t, c_1,\ldots, c_r, g_1,\ldots, g_s}: W\longrightarrow \mathbb{R}$ defined by $$\textbf{M}_{T, a_1,\ldots, a_t, c_1,\ldots, c_r, g_1,\ldots, g_s}(\psi)=\textbf{M}(T, a_1,\ldots, a_t, c_1,\ldots, c_r, g_1,\ldots, g_s, \psi)$$ is continuous for every $T\in\mathcal{H}$, $a_1,\ldots, a_t\in \widetilde{A}$, $c_1,\ldots, c_r\in \widetilde{C}$ and $g_1,\ldots, g_s\in \widetilde{G}$.

\textbf{(VIIII)}  The mapping $\textbf{M}$ be a homogeneous of degree $1$ in the variable $W$ if $$\textbf{M}(T, a_1,\ldots, a_t, c_1,\ldots, c_r, g_1,\ldots, g_s, \lambda \;\psi)= \lambda\;\textbf{M}(T, a_1,\ldots, a_t, c_1,\ldots, c_r, g_1,\ldots, g_s,  \psi).$$

\textbf{(X)}  The mapping $(\textbf{H}_{k})_{a_1,\ldots, a_t, c_1,\ldots, c_r, g}: K_k\longrightarrow \mathbb{R}$ defined by    $$(\textbf{H}_{k})_{a_1,\ldots, a_t, c_1,\ldots, c_r, g}(\varphi)=\textbf{H}_{k}(a_1,\ldots, a_t, c_1,\ldots, c_r, g, \varphi)$$ is continuous for every  $a_1,\ldots, a_t\in \widetilde{A}$, $c_1,\ldots, c_r\in \widetilde{C}$ and  $g\in G_k$. \\ The concept of \textbf{H}$_1$,..., \textbf{H}$_t$-\textbf{M}-$\left((s; q), p_1, ..., p_t\right)$-mixing map can be constructed as follows.
\begin{defi}
Let $0<q\leq s\leq\infty$ and $p\leq q$. A map $T$ from $\widetilde{A}$ into $B$ is called \textbf{H}$_1$,..., \textbf{H}$_t$-\textbf{M}-$\left((s; q), p_1, ..., p_t\right)$-mixing  if there is a constant $\delta\geq 0$ such that 
\begin{align}
&\mathfrak{m}_{(s;q)}^{\textbf{M}}\left((\sigma_{j}, T, a_{j}^{1}, \ldots, a_{j}^{t}, c_{j}^{1}, \ldots, c_{j}^{r}, g_{j}^{1}, \ldots ,g_{j}^{s}, \psi)\right)_{j=1}^{m} \nonumber \\
&\leq \delta\cdot\displaystyle\prod_{k=1}^{s}\sup\limits_{\varphi\in K_{k}}\left[\sum\limits_{j=1}^{m} \left|\sigma_j\right|^{p_{k}} \left|\textbf{H}_{k}(a_{j}^{1}, \ldots, a_{j}^{t}, c_{j}^{1}, \ldots, c_{j}^{r}, g_{j}^{k}, \varphi)\right|^{p_{k}}\right]^{\frac{1}{p_{k}}}
\end{align}
for all nonzero $\sigma_{1}, \cdots,\sigma_{m}\in\mathbb{R}$, $a_1,\ldots, a_t\in \widetilde{A}$, $c_{1}^{1}, \cdots, c_{m}^{r}\in \widetilde{C}$, $g_{1}^{1}, \cdots, g_{m}^{s}\in \widetilde{G}$ and $m\in\mathbb{N}$. The infimum of such constants $\delta$ is denoted by \textbf{H}$_1$,..., \textbf{H}$_t$-\textbf{M}$_{\left((s; q), p_1, ..., p_t\right)}(T)$. Let us denote by $\mathfrak{M}_{\left((s; q), p_1, ..., p_t\right)}^{\textbf{H}_1,..., \textbf{H}_t-\textbf{M}}\left(\widetilde{A}, B\right)$ the class of all \textbf{H}$_1$,..., \textbf{H}$_t$-\textbf{M}-$\left((s; q), p_1, ..., p_t\right)$-mixing maps from $\widetilde{A}$ into $B$.
\end{defi}
\begin{prop}\label{a} 
Let $0<q\leq s\leq\infty$ and $p\leq q$ and let $\textbf{H}_{k}$ and \textbf{M} satisfy Conditions (\textbf{X}), (\textbf{VIII}) and (\textbf{VIIII}), respectively. A map $T$ from $\widetilde{A}$ into $B$ is \textbf{H}$_1$,..., \textbf{H}$_t$-\textbf{M}-$\left((s; q), p_1, ..., p_t\right)$-mixing if and only if there is a constant $\delta\geq 0$ such that 
\begin{align}\label{tytytyi}
&\Bigg[\sum\limits_{j=1}^{m}\left|\sigma_j\right|^{q}\bigg[\sum\limits_{\zeta=1}^{n}\left|\textbf{M}(T, a_{j}^{1}, \ldots, a_{j}^{t}, c_{j}^{1},\ldots, c_{j}^{r}, g_{j}^{1},\ldots, g_{j}^{s}, v_{\zeta})\right|^{s}\bigg]^\frac{q}{s}\Bigg]^{\frac{1}{q}} \nonumber \\
&\leq\delta\cdot\displaystyle\prod_{k=1}^{s}\sup\limits_{\varphi\in K_{k}}\left[\sum\limits_{j=1}^{m}\left|\sigma_j\right|^{p_{k}} \left|\textbf{H}_{k}(a_{j}^{1}, \ldots, a_{j}^{t}, c_{j}^{1}, \ldots, c_{j}^{r}, g_{j}^{k}, \varphi)\right|^{p_{k}}\right]^{\frac{1}{p_{k}}} 
\cdot\left[\sum\limits_{\zeta=1}^{n} \textbf{P}(v_{\zeta})^{s}\right]^{\frac{1}{s}}  
\end{align} 
for every nonzero $\sigma_{1}, \cdots,\sigma_{m}\in\mathbb{R}$, $a_{j}^{1}, \ldots, a_{j}^{t}\in \widetilde{A}$, $c_{1}^{1}, \cdots, c_{m}^{r}\in \widetilde{C}$, $g_{1}^{1}, \cdots, g_{m}^{s}\in \widetilde{G}$, $v_1,\cdots, v_n\in V$ and $m,n\in\mathbb{N}$. Moreover 
$$\textbf{H}_1,..., \textbf{H}_t-\textbf{M}_{\left((s; q), p_1, ..., p_t\right)}(T)=\inf \delta.$$
\end{prop}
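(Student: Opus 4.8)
The plan is to mirror the proof of Proposition~\ref{aaa}, since Proposition~\ref{a} is its multilinear analogue: the left-hand side $\mathfrak{m}_{(s;q)}^{\textbf{M}}$ is structurally identical, and the only change is that the single Pietsch-type factor $\sup_{\varphi\in K}[\cdots]^{1/p}$ is replaced by the product $\prod_{k=1}^{s}\sup_{\varphi\in K_k}[\cdots]^{1/p_k}$, which simply rides along as a passive multiplicative constant throughout the estimates. First I would prove the forward implication (assume $T$ is mixing, derive~(\ref{tytytyi})). Given $v_1,\dots,v_n\in V$, I would form exactly the same discrete probability measure $\mu=\sum_{\zeta=1}^{n}t_\zeta\,\delta_\zeta$ on $\mathfrak{B}_{\textbf{P}}$, where $t_\zeta=\textbf{P}(v_\zeta)^s\big[\sum_{h=1}^{n}\textbf{P}(v_h)^s\big]^{-1}$ and $\delta_\zeta$ is the Dirac mass at $y_\zeta=v_\zeta/\textbf{P}(v_\zeta)\in\mathfrak{B}_{\textbf{P}}$. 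Condition~(\textbf{VIIII}) (degree-one homogeneity of $\textbf{M}$ in $W$) is what lets me rewrite the inner discrete sum $\big[\sum_\zeta|\textbf{M}(T,\dots,v_\zeta)|^s\big]^{q/s}$ as the integral $\big[\int_{\mathfrak{B}_{\textbf{P}}}|\textbf{M}(T,\dots,v)|^s d\mu(v)\big]^{q/s}$ times the factor $\big[\sum_\zeta\textbf{P}(v_\zeta)^s\big]^{1/s}$, exactly as in Proposition~\ref{aaa}. Then Proposition~\ref{two} identifies the integral expression with $\mathfrak{m}_{(s;q)}^{\textbf{M}}$, and the mixing hypothesis bounds that by $\delta$ times the product of suprema; reassembling the factors yields~(\ref{tytytyi}).

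For the converse, I would argue that~(\ref{tytytyi}) says precisely that
\begin{equation*}
\Bigg[\sum_{j=1}^{m}|\sigma_j|^{q}\Big[\int_{\mathfrak{B}_{\textbf{P}}}|\textbf{M}(T,\dots,v)|^{s}\,d\mu(v)\Big]^{\frac{q}{s}}\Bigg]^{\frac{1}{q}}\leq\delta\cdot\prod_{k=1}^{s}\sup_{\varphi\in K_k}\Big[\sum_{j=1}^{m}|\sigma_j|^{p_k}|\textbf{H}_k(\dots,\varphi)|^{p_k}\Big]^{\frac{1}{p_k}}
\end{equation*}
holds for every finitely supported probability measure $\mu$ on $\mathfrak{B}_{\textbf{P}}$. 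Here Condition~(\textbf{X}) (continuity of each $\textbf{H}_k$) together with Condition~(\textbf{VIII}) (continuity of $\textbf{M}$ in $W$) guarantees the relevant integrands are continuous, so the standard weak-$\ast$ density of finitely supported probability measures in $\textbf{W}(\mathfrak{B}_{\textbf{P}})$ upgrades the inequality from discrete $\mu$ to all $\mu\in\textbf{W}(\mathfrak{B}_{\textbf{P}})$. Taking the supremum over $\mu$ on the left and invoking Proposition~\ref{two} once more converts the left-hand side into $\mathfrak{m}_{(s;q)}^{\textbf{M}}$, which is exactly the defining inequality for $T$ to be mixing.

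The identification of $\inf\delta$ with $\textbf{H}_1,\dots,\textbf{H}_t\text{-}\textbf{M}_{((s;q),p_1,\dots,p_t)}(T)$ follows by tracking constants: each implication preserves the admissible $\delta$, so the two infima coincide. I do not expect a genuine obstacle here, since the product of suprema is inert — it never interacts with the homogeneity argument, the $\mu$-density argument, or Proposition~\ref{two}. The one point demanding care is bookkeeping: each $\textbf{H}_k$ depends only on its own slot $g^k$ and its own compact space $K_k$, so I must carry the full multi-index data $(a_j^1,\dots,a_j^t,c_j^1,\dots,c_j^r,g_j^1,\dots,g_j^s)$ through every line without conflating the $s$ distinct supremum factors. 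Beyond that, the proof is a verbatim adaptation of Proposition~\ref{aaa}, and I would keep it brief by explicitly citing that proposition's argument for the homogeneity rewriting and the density step.
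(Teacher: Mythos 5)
Your plan correctly reproduces the paper's argument for the range $0<q<s<\infty$: the paper's own proof of Proposition~\ref{a} (its ``Case~(2)'') is exactly the adaptation you describe --- the same discrete measure $\mu=\sum_{\zeta}t_{\zeta}\delta_{\zeta}$ built from the normalized $v_{\zeta}/\textbf{P}(v_{\zeta})$, homogeneity via Condition~(\textbf{VIIII}), Proposition~\ref{two} to identify the integral expression with $\mathfrak{m}_{(s;q)}^{\textbf{M}}$, and the passage from discrete to arbitrary probability measures for the converse, with the product $\prod_{k=1}^{s}\sup_{\varphi\in K_{k}}[\cdots]^{1/p_{k}}$ carried along as an inert multiplicative constant, just as you say.

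However, there is a genuine gap: the hypothesis of Proposition~\ref{a} is $0<q\leq s\leq\infty$, while Proposition~\ref{two} (and Proposition~\ref{aaa}, whose proof you propose to mirror verbatim) is stated only under the strict inequality $0<q<s<\infty$. Your entire argument, in both directions, funnels through Proposition~\ref{two}, so it says nothing about the boundary case $s=q$, which the statement explicitly includes. This case cannot be absorbed silently: when $s=q$ one has $r=\infty$, and the machinery behind Proposition~\ref{two} (the conjugate exponents $u=r/q$, $v=s/q$, the compact convex set $Z\subset\ell_{u}(I)$, and Pietsch's Lemma~E.4.2) degenerates. The paper accordingly splits its proof into two cases and devotes ``Case~(1)'' to $s=q$, arguing directly: by Lemma~\ref{twotwo}, $\mathfrak{m}_{(q;q)}^{\textbf{M}}$ coincides with $\sup_{v\in\mathfrak{B}_{\textbf{P}}}\bigl[\sum_{j}|\sigma_{j}|^{q}\,|\textbf{M}(T,\ldots,v)|^{q}\bigr]^{1/q}$; then inequality~(\ref{tytytyi}) specialized to $n=1$ yields the mixing property (using $\textbf{P}(v)\leq 1$ on $\mathfrak{B}_{\textbf{P}}$), and conversely an $\ell_{\infty}$-normalization of the $\tau_{j}$ together with homogeneity recovers~(\ref{tytytyi}) from the mixing property --- no measures and no Proposition~\ref{two} at all. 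To prove the proposition on its full stated range you must add this separate (easy but structurally different) argument for $s=q$; as written, your proof establishes only the case $q<s<\infty$.
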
 
\begin{proof} There are two cases. 

\textbf{Case (1)}: when $s=q$. Assume that Inequality (\ref{tytytyi}) holds and take $\zeta=1$ we have 
\begin{align}
&\Bigg[\sum\limits_{j=1}^{m}\left|\sigma_j\right|^{q}\left|\textbf{M}(T, a_{j}^{1}, \ldots, a_{j}^{t}, c_{j}^{1},\ldots, c_{j}^{r}, g_{j}^{1},\ldots, g_{j}^{s}, v)\right|^{q}\Bigg]^{\frac{1}{q}}  \nonumber \\
&\leq\delta\cdot\displaystyle\prod_{k=1}^{s}\sup\limits_{\varphi\in K_{k}}\left[\sum\limits_{j=1}^{m} \left|\sigma_j\right|^{p_{k}} \left|\textbf{H}_{k}(a_{j}^{1}, \ldots, a_{j}^{t}, c_{j}^{1}, \ldots, c_{j}^{r}, g_{j}^{k}, \varphi)\right|^{p_{k}}\right]^{\frac{1}{p_{k}}}   \nonumber
\end{align}
for all $v\in\mathfrak{B}_{\textbf{P}}$. Hence 
\begin{align}\label{bfgus}
&\sup\limits_{v\in\mathfrak{B}_{\textbf{P}}}\left[\sum\limits_{j=1}^{m}\left| \sigma_{j}\right|^{q} \left|\textbf{M}(T, a_{j}^{1}, \ldots, a_{j}^{t}, c_{j}^{1},\ldots, c_{j}^{r}, g_{j}^{1},\ldots, g_{j}^{s}, v)\right|^{q}\right]^{\frac{1}{q}}  \nonumber \\
&\leq\delta\cdot\displaystyle\prod_{k=1}^{s}\sup\limits_{\varphi\in K_{k}}\left[\sum\limits_{j=1}^{m}\left|\sigma_j\right|^{p_{k}} \left|\textbf{H}_{k}(a_{j}^{1}, \ldots, a_{j}^{t}, c_{j}^{1}, \ldots, c_{j}^{r}, g_{j}^{k}, \varphi)\right|^{p_{k}}\right]^{\frac{1}{p_{k}}}.  
\end{align} 
From Lemma \ref{twotwo} and using Inequality (\ref{bfgus}) we obtain
\begin{align} 
&\mathfrak{m}_{(s;q)}^{\textbf{M}}\left((\sigma_{j}, T, a_{j}^{1}, \ldots, a_{j}^{1}, \ldots, a_{j}^{t}, c_{j}^{r}, g_{j}^{1}, \ldots ,g_{j}^{s}, v)\right)_{j=1}^{m}  \nonumber \\
& =\sup\limits_{v\in\mathfrak{B}_{\textbf{P}}}\left[\sum\limits_{j=1}^{m}\left| \sigma_{j}\right|^{q} \left|\textbf{M}(T, a_{j}^{1}, \ldots, a_{j}^{t}, c_{j}^{1},\ldots, c_{j}^{r}, g_{j}^{1},\ldots, g_{j}^{s}, v)\right|^{q}\right]^{\frac{1}{q}}  \nonumber \\
&\leq\delta\cdot\displaystyle\prod_{k=1}^{s}\sup\limits_{\varphi\in K_{k}}\left[\sum\limits_{j=1}^{m}\left|\sigma_j\right|^{p_{k}} \left|\textbf{H}_{k}(a_{j}^{1}, \ldots, a_{j}^{t}, c_{j}^{1}, \ldots, c_{j}^{r}, g_{j}^{k}, \varphi)\right|^{p_{k}}\right]^{\frac{1}{p_{k}}}. \nonumber 
\end{align} 
Hence $T$ is \textbf{H}$_1$,..., \textbf{H}$_t$-\textbf{M}-$\left((s; q), p_1, ..., p_t\right)$-mixing map and $\textbf{H}_1,..., \textbf{H}_t-\textbf{M}_{\left((s; q), p_1, ..., p_t\right)}(T)\leq \delta$. Conversely, suppose that $T$ is \textbf{H}$_1$,..., \textbf{H}$_t$-\textbf{M}-$\left((s; q), p_1, ..., p_t\right)$-mixing map. Given $\sigma_{1}, \cdots,\sigma_{m}\in\mathbb{R}$, $a_{1}^{1}, \cdots, a_{m}^{t}\in \widetilde{A}$, $c_{1}^{1}, \cdots, c_{m}^{r}\in \widetilde{C}$, $g_{1}^{1}, \cdots, g_{m}^{s}\in \widetilde{G}$, $v_1,\cdots, v_n\in V$ and $m,n\in\mathbb{N}$. 
\begin{align}\label{kamoon}
&\Bigg[\sum\limits_{j=1}^{m}\left|\sigma_j\right|^{q}\sum\limits_{\zeta=1}^{n}\left|\textbf{M}(T, a_{j}^{1}, \ldots, a_{j}^{t}, c_{j}^{1},\ldots, c_{j}^{r}, g_{j}^{1},\ldots, g_{j}^{s}, v_{\zeta})\right|^{q}\Bigg]^{\frac{1}{q}} \nonumber \\
&\leq\left\|\left(\tau_{j}\right)_{j=1}^{m}\Big|\ell_{\infty}\right\|\cdot\Bigg[\sum\limits_{j=1}^{m}\left|\frac{\sigma_j}{\tau_{j}}\right|^{q}\sum\limits_{\zeta=1}^{n}\textbf{P}(v_{\zeta})^{q}\left|\textbf{M}(T, a_{j}^{1}, \ldots, a_{j}^{t}, c_{j}^{1},\ldots, c_{j}^{r}, g_{j}^{1},\ldots, g_{j}^{s}, \frac{v_{\zeta}}{\textbf{P}(v_{\zeta})})\right|^{q}\Bigg]^{\frac{1}{q}} \nonumber \\
&\leq\left\|\left(\tau_{j}\right)_{j=1}^{m}\Big|\ell_{\infty}\right\|\sup\limits_{v\in\mathfrak{B}_{\textbf{P}}}\left[\sum\limits_{j=1}^{m}\left|\frac{\sigma_j}{\tau_{j}}\right|^{q} \left|\textbf{M}(T, a_{j}^{1}, \ldots, a_{j}^{t}, c_{j}^{1},\ldots, c_{j}^{r}, g_{j}^{1},\ldots, g_{j}^{s}, v)\right|^{q}\right]^{\frac{1}{q}}\left[\sum\limits_{\zeta=1}^{n}\textbf{P}(v_{\zeta})^{q}\right]^{\frac{1}{q}}.  
\end{align}  
Taking the infimum over all sequences $\left(\tau_{j}\right)_{j=1}^{m}\in \ell_{\infty}$ of Inequality (\ref{kamoon}) we get
\begin{align}
&\Bigg[\sum\limits_{j=1}^{m}\left|\sigma_j\right|^{q}\sum\limits_{\zeta=1}^{n}\left|\textbf{M}(T, a_{j}^{1}, \ldots, a_{j}^{t}, c_{j}^{1},\ldots, c_{j}^{r}, g_{j}^{1},\ldots, g_{j}^{s}, v_{\zeta})\right|^{q}\Bigg]^{\frac{1}{q}} \nonumber \\
&\leq\left[\sum\limits_{\zeta=1}^{n}\textbf{P}(v_{\zeta})^{q}\right]^{\frac{1}{q}}\mathfrak{m}_{(q;q)}^{\textbf{M}}\left((\sigma_{j}, T, a_{j}^{1}, \ldots, a_{j}^{t}, c_{j}^{1}, \ldots, c_{j}^{r}, g_{j}^{1}, \ldots ,g_{j}^{s}, \psi)\right)_{j=1}^{m}\nonumber \\
&\leq\left[\sum\limits_{\zeta=1}^{n}\textbf{P}(v_{\zeta})^{q}\right]^{\frac{1}{q}}\textbf{H}_1,..., \textbf{H}_t-\textbf{M}_{\left((q; q), p_1, ..., p_t\right)}(T)\displaystyle\prod_{k=1}^{s}\sup\limits_{\varphi\in K_{k}}\left[\sum\limits_{j=1}^{m} \left|\sigma_j\right|^{p_{k}} \left|\textbf{H}_{k}(a_{j}^{1}, \ldots, a_{j}^{t}, c_{j}^{1}, \ldots, c_{j}^{r}, g_{j}^{k}, \varphi)\right|^{p_{k}}\right]^{\frac{1}{p_{k}}}. \nonumber  
\end{align} 
\textbf{Case (2)}: when $q < s$. Suppose that $T$ is \textbf{H}$_1$,..., \textbf{H}$_t$-\textbf{M}-$\left((s; q), p_1, ..., p_t\right)$-mixing map.  
Given $\sigma_{1}, \cdots,\sigma_{m}\in\mathbb{R}$, $a_{1}^{1}, \cdots, a_{m}^{t}\in \widetilde{A}$, $c_{1}^{1}, \cdots, c_{m}^{r}\in \widetilde{C}$, $g_{1}^{1}, \cdots, g_{m}^{s}\in \widetilde{G}$. From Proposition \ref{two} and condition (\textbf{VIIII}) we have
\begin{align}
\Bigg[\sum\limits_{j=1}^{m}\left|\sigma_j\right|^{q}&\bigg[\sum\limits_{\zeta=1}^{n}\left|\textbf{M}(T, a_{j}^{1}, \ldots, a_{j}^{t}, c_{j}^{1},\ldots, c_{j}^{r}, g_{j}^{1},\ldots, g_{j}^{s}, v_{\zeta})\right|^{s}\bigg]^\frac{q}{s}\Bigg]^{\frac{1}{q}} \nonumber \\
& =\Bigg[\sum\limits_{j=1}^{m}\left|\sigma_j\right|^{q}\bigg[\int\limits_{\mathfrak{B}_{\textbf{P}}}\left|\textbf{M}(T, a_{j}^{1}, \ldots, a_{j}^{t}, c_{j}^{1},\ldots, c_{j}^{r}, g_{j}^{1},\ldots, g_{j}^{s}, v)\right|^{s} d\mu(v)\bigg]^\frac{q}{s}\Bigg]^{\frac{1}{q}}\cdot\left[\sum\limits_{\zeta=1}^{n} \textbf{P}(v_{k})^{s}\right]^{\frac{1}{s}}  \nonumber \\ 
&\leq\mathfrak{m}_{(s;q)}^{\textbf{M}}\left((\sigma_{j}, T, a_{j}^{1}, \ldots, a_{j}^{t}, c_{j}^{1}, \ldots, c_{j}^{r}, g_{j}^{1}, \ldots ,g_{j}^{s}, v)\right)_{j=1}^{m}\cdot\left[\sum\limits_{\zeta=1}^{n} \textbf{P}(v_{k})^{s}\right]^{\frac{1}{s}}\nonumber \\
&\leq\textbf{H}_1,..., \textbf{H}_t-\textbf{M}_{\left((s, q), p_1, ..., p_t\right)}(T)\cdot\left[\sum\limits_{\zeta=1}^{n} \textbf{P}(v_{k})^{s}\right]^{\frac{1}{s}}   \nonumber \\
&\cdot\displaystyle\prod_{k=1}^{s}\sup\limits_{\varphi\in K_{k}}\left[\sum\limits_{j=1}^{m} \left|\sigma_j\right|^{p_{k}} \left|\textbf{H}_{k}(a_{j}^{1}, \ldots, a_{j}^{t}, c_{j}^{1}, \ldots, c_{j}^{r}, g_{j}^{k}, \varphi)\right|^{p_{k}}\right]^{\frac{1}{p_{k}}}. \nonumber
\end{align} 
To show the converse, notice that (\ref{tytytyi}) means  
\begin{align}\label{oneone}
\Bigg[\sum\limits_{j=1}^{m}\left|\sigma_j\right|^{q}&\bigg[\int\limits_{\mathfrak{B}_{\textbf{P}}}\left|\textbf{M}(T, a_{j}^{1}, \ldots, a_{j}^{t}, c_{j}^{1},\ldots, c_{j}^{r}, g_{j}^{1},\ldots, g_{j}^{s}, v)\right|^{s} d\mu(v)\bigg]^\frac{q}{s}\Bigg]^{\frac{1}{q}}\nonumber \\
&\leq \delta\cdot\displaystyle\prod_{k=1}^{s}\sup\limits_{\varphi\in K_{k}}\left[\sum\limits_{j=1}^{m}\left|\sigma_j\right|^{p_{k}} \left|\textbf{H}_{k}(a_{j}^{1}, \ldots, a_{j}^{t}, c_{j}^{1}, \ldots, c_{j}^{r}, g_{j}^{k}, \varphi)\right|^{p_{k}}\right]^{\frac{1}{p_{k}}}  
\end{align}
for every discrete probability measure $\mu$ on $\mathfrak{B}_{\textbf{P}}$ and $\sigma_{1}, \cdots,\sigma_{m}\in\mathbb{R}$, $a_{1}^{1}, \cdots, a_{m}^{t}\in \widetilde{A}$, $c_{1}^{1}, \cdots, c_{m}^{r}\in \widetilde{C}$, $g_{1}^{1}, \cdots, g_{m}^{s}\in \widetilde{G}$. It follows that (\ref{oneone}) holds for all probability measures $\mu$ on $\mathfrak{B}_{\textbf{P}}$ and $\sigma_{1}, \cdots,\sigma_{m}\in\mathbb{R}$, $a_{1}^{1}, \cdots, a_{m}^{t}\in \widetilde{A}$, $c_{1}^{1}, \cdots, c_{m}^{r}\in \widetilde{C}$, $g_{1}^{1}, \cdots, g_{m}^{s}\in \widetilde{G}$. Taking the supremum over $\mu\in \textbf{W}(\mathfrak{B}_{\textbf{P}})$ on the left side of (\ref{oneone}) and using Proposition \ref{two}, we obtain
\begin{align}
&\mathfrak{m}_{(s;q)}^{\textbf{M}}\left((\sigma_{j}, T, a_{j}^{1}, \ldots, a_{j}^{t}, c_{j}^{1}, \ldots, c_{j}^{r}, g_{j}^{1}, \ldots ,g_{j}^{s}, \psi)\right)_{j=1}^{m}\nonumber \\
&\leq \delta\cdot\displaystyle\prod_{k=1}^{s}\sup\limits_{\varphi\in K_{k}}\left[\sum\limits_{j=1}^{m}\left|\sigma_j\right|^{p_{k}} \left|\textbf{H}_{k}(a_{j}^{1}, \ldots, a_{j}^{t}, c_{j}^{1}, \ldots, c_{j}^{r}, g_{j}^{k}, \varphi)\right|^{p_{k}}\right]^{\frac{1}{p_{k}}}. \nonumber 
\end{align}
\end{proof}
           
\end{document}